\documentclass[11pt,reqno]{amsart}

\usepackage[utf8]{inputenc}
\usepackage[T1]{fontenc}
\usepackage[english]{babel}
\usepackage{textcomp}
\usepackage{amsmath,amssymb,amsthm}
\usepackage{lmodern}
\usepackage[a4paper]{geometry}
\usepackage{xcolor, pict2e}
\usepackage{microtype}
\usepackage{caption}
\usepackage{listings}
\usepackage{multicol}
\usepackage{moreverb}
\usepackage{hyperref}
\hypersetup{pdfstartview=XYZ}
\usepackage{wrapfig}
\usepackage[sans]{dsfont}
\usepackage{booktabs}

\usepackage{mathrsfs}
\usepackage{tikz, pgfplots}
\usepgfplotslibrary{groupplots}
\usetikzlibrary{positioning}
\usepackage{amsmath,amssymb}
\usetikzlibrary{decorations.pathreplacing}

\usepackage{ytableau}
\usepackage{array}

\usepackage{mathdots}
\usepackage{hyperref}
\hypersetup{
    colorlinks=true,
    linkcolor=blue,
    citecolor=magenta,
    urlcolor=blue,
    pdfborder={0 0 0}
}

\definecolor{fond}{rgb}{0.05,0.05,0.25}

\DeclareMathOperator{\ch}{\mathrm{ch}}
\DeclareMathOperator{\Id}{\mathrm{Id}}

\DeclareMathOperator{\cyc}{\mathrm{cyc}}

\newcommand{\kS}{{\ensuremath{\mathfrak{S}}} }

\usepackage[font=sf, labelfont={sf,bf}, margin=1cm]{caption}
\usepackage{enumitem}
\setlist[itemize,1]{nosep}
\setlist[enumerate,1]{itemsep=0pt,label=(\alph*)}

\usepackage{float}
\usepackage[caption = false]{subfig}
\usepackage{graphicx}

\newtheorem*{theorem*}{Theorem}
\newtheorem{theorem}{Theorem}[section]
\newtheorem{proposition}[theorem]{Proposition}
\newtheorem{lemma}[theorem]{Lemma}

\usepackage{todonotes}

\newcommand{\cE}{{\ensuremath{\mathcal E}} }

\newcommand{\ag}{\left\{ } 

\newcommand{\ad}{\right\} }

\newcommand{\pg}{\left(} 
\newcommand{\pd}{\right)}
\newcommand{\bg}{\left|}
\newcommand{\bd}{\right|}
\newcommand{\lf}{\left\lfloor}
\newcommand{\rf}{\right\rfloor}
\newcommand{\lc}{\left\lceil}
\newcommand{\rc}{\right\rceil}

\DeclareMathOperator{\ST}{ST}
\DeclareMathOperator{\RT}{RT}
\DeclareMathOperator{\height}{ht}

\makeatletter
\newcommand*\bigcdot{\mathpalette\bigcdot@{.5}}
\newcommand*\bigcdot@[2]{\mathbin{\vcenter{\hbox{\scalebox{#2}{$\m@th#1\bullet$}}}}}
\makeatother

\numberwithin{equation}{section}

\pgfplotsset{compat=1.18}

\begin{document}

\title{River ribbon tableaux and symmetric group characters}
\author{Lucas Teyssier}
\address{Université de Lorraine}
\email{lucas.teyssier@univ-lorraine.fr}
\date{}

\begin{abstract}
    We introduce river ribbon tableaux, where a river of non-trivial ribbons is placed between two banks of single boxes. Compared to other ribbon peelings, this reduces the geometric differences across diagrams of different shapes, and the maximal number of ribbon configurations.

This allows us to prove a uniform character bound for symmetric groups which, specialized to balanced diagrams, proves the conjecture of Moore and Russell. 

As an application, we prove a conjecture of Dousse and Féray on the number of standard tableaux of skew shapes.
\end{abstract}

\maketitle

\section{Introduction}
\subsection{Main results}

Let $\lambda$ be a Young diagram of size $|\lambda| = n$, which we always identify with the associated irreducible representation of $\kS_n$. Let $s = s(\lambda)$ be its maximal hook length, $\delta = \delta(\lambda)$ be its diagonal length, and $d_\lambda$ be its dimension.
Let $\sigma\in \kS_n$ be a permutation. We denote its number of cycles by $\cyc(\sigma)$ and we set $|\sigma| = n-\cyc(\sigma)$, which is the minimal number of transpositions one needs to multiply to obtain $\sigma$. 
We use the notation $\ch^\lambda(\sigma)$ for characters, $\chi^\lambda(\sigma) = \ch^\lambda(\sigma)/d_\lambda$ for renormalized characters, $\widehat{\kS_n}$ for the set of irreducible representations of $\kS_n$, and we may write $\lambda \vdash n$ instead of $\lambda \in \widehat{\kS_n}$. Finally, we say that $\lambda$ is $C$-balanced if $s(\lambda) \leq C \sqrt{|\lambda|}$.

Our main result is the following.

\begin{theorem}\label{thm: main general form}
Let $n\geq 1$, $\lambda\vdash n$, and $\sigma\in \kS_n$. Then
\begin{equation}\label{eq: main general form only with power |sigma|}
     \bg \chi^\lambda(\sigma) \bd \leq \pg 5\frac{\delta s^2}{n^2} \pd^{|\sigma|}.
\end{equation}
\end{theorem}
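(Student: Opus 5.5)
We plan to prove the bound through a Murnaghan--Nakayama-type expansion adapted to the geometry of $\lambda$: the ``river ribbon tableaux'' announced in the abstract. Write $\sigma$ with cycle type $\mu=(\mu_1,\dots,\mu_\ell,1^{m_1})$, where the $\mu_i\geq 2$ are the non-trivial cycles, so that $|\sigma|=\sum_i(\mu_i-1)$. By the Murnaghan--Nakayama rule we may remove the non-trivial ribbons in any order and fill the rest with single boxes. This gives
\begin{equation*}
\ch^\lambda(\sigma)=\sum_{T}\pm\, d_{\lambda\setminus T},
\end{equation*}
where $T$ runs over placements of the non-trivial ribbons. Dividing by $d_\lambda$ and using the hook length formula, each term becomes a ratio of hook products.

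The key idea is to place the ribbons as a ``river'' between two banks of single boxes, not at the outer rim. One bank is an inner region near the corner $(1,1)$ and the other is the outer part, so the complement splits into an inner skew shape and an outer skew shape, both filled by standard fillings. The count of standard fillings of a skew shape $\lambda/\nu$ is controlled by $f^{\lambda/\nu}\le \binom{n}{|\nu|}\cdots$, or more precisely by ratios such as $f^{\lambda/\nu}/f^\lambda\le f^{\nu}\cdot(\text{something})/\,n^{\downarrow|\nu|}$.

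We then bound each contribution in two steps. First, we bound the number of river configurations: a ribbon of length $k$ is determined by a starting content among roughly $s$ choices on the relevant diagonal band and by a shape inside the width-$\delta$ strip, so there should be at most about $\delta\cdot s^{k-1}$ choices up to constants. Second, we bound the dimension ratio $d_{\text{rest}}/d_\lambda$ by $\approx (C/n)^{k}$ per ribbon of size $k$, i.e.\ by roughly $\prod_i n^{-\mu_i}$ up to the entropy of interleaving. Combining the two, each cycle of length $k$ should contribute at most $C\,\delta s^{k-1}\cdot s^{k-1}/n^{k}\cdot n\lesssim (C\delta s^2/n^2)^{k-1}$, using $s^2\geq n$ and $\delta\le s$ to absorb lower-order factors. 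Multiplying over the cycles then gives exponent $|\sigma|$.

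The main obstacle is the first step: showing that the river placement keeps the number of ribbon configurations for a cycle of length $k$ polynomial-exponential with base $\delta s^2/n^2$, uniformly in the shape. Ribbons at the rim of an unbalanced diagram have too many placements, which is exactly why the river is needed. We would first try to fix the river to lie along a fixed hook-shaped band, at distance about $\sqrt{n}$ or tuned to $n/s$ from the diagonal. We would then encode each ribbon by its diagonal crossing, which gives at most $\delta$ choices, and by its up/right step sequence, which gives at most $2^{k}$ choices. A careful constant chase then yields the factor $5$.
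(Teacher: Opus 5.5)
Your set-up matches the paper: you order the weight as $(1^q,r_1,\dots,r_m,1^p)$, so that an inner bank $\mu\vdash q$ and an outer bank $\lambda\backslash\nu$ of single boxes sandwich the ribbons tiling $\nu\backslash\mu$. Beyond that set-up, however, there is a genuine gap. The dimension ratio you posit is false, your placement count is far too lossy, and the mechanism that actually makes the sum small is missing.

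\textbf{The ratio and the placement count.} The ratio is not $(C/n)^k$. Take $\lambda=(n-1,1)$, so $\delta=1$ and $s=n$, and let $\sigma$ be a transposition. Then $\chi^\lambda(\sigma)=(n-3)/(n-1)$, whereas your two ingredients ($\delta s$ placements times $(C/n)^2$) would give $O(1/n)$. The correct pointwise order of $d_\mu d_{\lambda\backslash\nu}/d_\lambda$ is $(Cs/n)^k$. On the other side, the number of ways to add or remove an $r$-ribbon on a given shape is at most $2\delta+r$ or $2\delta$. This is because one endpoint determines the ribbon: it sits on one of the first $\delta$ rows or columns, or one of its boxes sits at the diagonal cell $(\delta(\eta)+1,\delta(\eta)+1)$. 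So no factor $s^{k-1}$ arises. A $2^k$ encoding loss would by itself cost up to $4^{|\sigma|}$, which is incompatible with the constant $5$. Your combination line is also inconsistent: $C\delta s^{k-1}\cdot s^{k-1}\cdot n/n^{k}=C\delta(s^2/n)^{k-1}\ge 1$.

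\textbf{The missing mechanism.} More fundamentally, the river cannot be fixed to a band. The Murnaghan--Nakayama sum $M_{p,q}=\sum_{\mu,\nu}d_\mu W_{\mu,\nu}d_{\lambda\backslash\nu}$ runs over all inner banks $\mu\vdash q$ with $\mu\subseteq\lambda$, and there are already $e^{\Theta(\sqrt n)}$ of them for a square. So no bound of the form (number of configurations)$\times$(maximal ratio) can work; the sum over $\mu$ must be absorbed by weights that sum to $d_\lambda$. The paper does this with Cauchy--Schwarz applied to
\[
d_\mu W_{\mu,\nu} d_{\lambda\backslash\nu}=\sqrt{d_\mu d_{\lambda\backslash\mu}W_{\mu,\nu}}\,\sqrt{d_\nu d_{\lambda\backslash\nu}W_{\mu,\nu}}\,\sqrt{\frac{d_\mu d_{\lambda\backslash\nu}}{d_\nu d_{\lambda\backslash\mu}}}.
\]
Using $\sum_\mu d_\mu d_{\lambda\backslash\mu}=d_\lambda$ and the ribbon counts above, the two resulting sums are at most $e^{k/(2\delta)}(2\delta)^m d_\lambda$ and $(2\delta)^m d_\lambda$. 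The key lemma you are missing is the uniform bound
\[
\frac{d_\mu d_{\lambda\backslash\nu}}{d_\nu d_{\lambda\backslash\mu}}\le\left(\frac{s+1}{2}\right)^{2k}\frac{p!\,q!}{(p+k)!\,(q+k)!}.
\]
It follows from two ingredients:
\begin{enumerate}
\item the Chan--Pak generalized Bj\"orner inequality, which lets one replace $\lambda$ by its bounding rectangle $R=[a^b]$;
\item Stanley's formula $f(R\backslash\eta)=f(R)\frac{d_\eta}{|\eta|!}\prod_{u\in\eta}(b+c(u))(a-c(u))$, in which each factor is at most $((s+1)/2)^2$.
\end{enumerate}
Choosing $p,q\approx(n-k)/2$ makes the square root of the factorial ratio at most $(e/n)^k$, hence $|\chi^\lambda(\sigma)|\le(\gamma s/n)^k(2\delta)^m$. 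Writing $k=|\sigma|+m$ and using $m\le|\sigma|$ and $2\gamma\delta s/n\ge 1$ gives exponent $|\sigma|$. The constant $5$ then comes from reducing to the case $s<n/5$, which forces $\delta\ge 6$ and $s\ge 11$.
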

Under the mild shape assumption $\delta s = O(n)$, the exponentiated term becomes $O\pg \frac{s}{n}\pd$, which does not have the permutation part $\frac{|\sigma|}{n}$ from the term $O\pg \max\pg \frac{s}{n},\frac{|\sigma|}{n}\pd \pd$ of Féray and Śniady \cite{FeraySniady2011}.

Specializing Theorem~\ref{thm: main general form} to balanced diagrams, we obtain the following bound which 
was conjectured by Moore and Russell (see \cite[Conjecture 3 and Section 1.7.1]{RattanSniady2008}).

\begin{theorem}\label{thm: main balanced Moore Russell form}
    Let $C> 1$. Let $n\geq 1$, $\lambda\vdash n$ be $C$-balanced, and $\sigma\in \kS_n$. Then
\begin{equation}\label{eq: Bound Moore Russell form}
        \bg \chi^\lambda(\sigma) \bd \leq  \pg \frac{5C^2}{\sqrt{n}}\pd^{|\sigma|}.
    \end{equation}
\end{theorem}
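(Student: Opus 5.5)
This follows from Theorem~\ref{thm: main general form} once we bound the quantity $\delta s^2/n^2$ using only the balance assumption. We may therefore assume Theorem~\ref{thm: main general form} and reduce to bounding the diagonal length $\delta$ and the maximal hook length $s$ in terms of $\sqrt{n}$.

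First, $s \le C\sqrt{n}$ by the definition of $C$-balanced, so $s^2 \le C^2 n$.

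Second, we bound $\delta$. The diagram $\lambda$ contains the $\delta\times\delta$ Durfee square, so $\delta^2 \le n$, i.e.\ $\delta \le \sqrt{n}$. (Alternatively, the hook of the box $(1,1)$ has length $\lambda_1 + \lambda_1' - 1 \ge 2\delta - 1$, which only gives $\delta \le (s+1)/2$. The Durfee square bound is cleaner and does not depend on $C$.)

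Combining the two bounds,
\begin{equation*}
5\frac{\delta s^2}{n^2} \le 5\frac{\sqrt{n}\cdot C^2 n}{n^2} = \frac{5C^2}{\sqrt{n}} .
\end{equation*}
The base in \eqref{eq: main general form only with power |sigma|} is nonnegative and the exponent $|\sigma|\ge 0$ is an integer, so $x\mapsto x^{|\sigma|}$ is nondecreasing on $[0,\infty)$. Hence Theorem~\ref{thm: main general form} gives
\begin{equation*}
\bg \chi^\lambda(\sigma)\bd \le \pg 5\frac{\delta s^2}{n^2}\pd^{|\sigma|} \le \pg \frac{5C^2}{\sqrt n}\pd^{|\sigma|},
\end{equation*}
which is \eqref{eq: Bound Moore Russell form}. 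The hypothesis $C>1$ plays no role in this deduction; it is only natural because every diagram satisfies $s \ge \sqrt{n}$, so no diagram is $C$-balanced for $C<1$. There is no real obstacle here: all the difficulty lies in Theorem~\ref{thm: main general form}, and the only point to check is the Durfee square bound $\delta\le\sqrt n$.
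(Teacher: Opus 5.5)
Your proposal is correct and is essentially the paper's own proof: it deduces the statement from Theorem~\ref{thm: main general form} using $\delta \le \sqrt{n}$ and $s \le C\sqrt{n}$, giving $\delta s^2/n^2 \le C^2/\sqrt{n}$. Your extra remarks (the Durfee square justification of $\delta\le\sqrt n$, monotonicity of $x\mapsto x^{|\sigma|}$, and the hypothesis $C>1$ playing no role) are accurate.
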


\subsection{Link with previous results}
Bounds on characters of symmetric groups have a long history. There are two main lines of results.

\medskip

One main line consists of bounds for relatively \textit{thick} diagrams.
For balanced diagrams, \eqref{eq: Bound Moore Russell form} was proved by Biane \cite{Biane1998RepresentationsFreeProbability, Biane2001ApproximateFactorizationConcentration}, building on Kerov's work \cite{Kerov1998InterlacingMeasures}, for permutations with bounded support. It was then extended to permutations with support $O(\sqrt{n})$ by Rattan and Śniady \cite{RattanSniady2008}, together with weaker bounds for permutations with support $\omega(\sqrt{n})$. Then Féray and Śniady \cite{FeraySniady2011} proved a more general bound which for balanced diagrams and permutations with support $\omega(\sqrt{n})$ gives
\begin{equation}
    |\chi^\lambda(\sigma)| \leq \pg O(1) \frac{|\sigma|}{n}\pd^{|\sigma|}.
\end{equation}
This bound becomes weaker as the support of $\sigma$ grows but is better than the previous bound from \cite{RattanSniady2008}. Finally, we recently showed in \cite{Teyssier2025SkewDimensionsAndCharacters} that the bound
\begin{equation}\label{eq: Bound balanced thick hook}
        \bg \chi^\lambda(\sigma) \bd \leq  \pg \frac{O(1)}{\sqrt{|\sigma|}}\pd^{|\sigma|}
    \end{equation}
holds for any support size, which is sharp for permutations with support $\Theta(n)$ but becomes weaker as the support decreases. Our Theorem~\ref{thm: main balanced Moore Russell form} completes this line of results, by proving the conjecture of Moore and Russell.

Our bound also has a numerical advantage compared to the previous bounds \cite{RattanSniady2008, FeraySniady2011, Teyssier2025SkewDimensionsAndCharacters}: the universal constant 5 is very small, and Theorem~\ref{thm: main balanced Moore Russell form} therefore already gives non-trivial bounds for reasonably small diagrams. 

\medskip

A second main line of character bounds is intimately related to generalizing the work of Diaconis and Shahshahani \cite{DiaconisShahshahani1981} on random transpositions to other conjugacy classes. Such bounds, often called exponential character bounds, include \cite{Roichman1996, MullerSchlagePuchta2007precutoff, LarsenShalev2008} and our recent results \cite{TeyssierThevenin2025virtualdegreeswitten, Olesker-TaylorTeyssierThevenin2025sharpboundsandcutoff}, which improve these previous bounds at all scales. These bounds typically rely on the Murnaghan--Nakayama rule, while \cite{RattanSniady2008,FeraySniady2011} do not.

In the last years hypercontractivity arguments have been developed, see \cite{KeevashLifshitz2023sharp, KeevashLifshitzLongMinzer2024HypercontractivityGlobalFunctions}, which also led to some character bounds \cite{LifshitzMarmor2024charactershypercontractivity}. While the character bounds obtained combinatorially are stronger for applications to mixing times, hypercontractivity has also led to striking applications on problems related to exponential character bounds, including a recent proof of the robust Thompson conjecture for alternating groups \cite{KellerLifshitzMarmorSheinfeld2026ThompsonCoveringNumbers}.

\section{River ribbon tableaux}
\subsection{Ribbon tableaux and the Murnaghan--Nakayama rule}
A ribbon is a connected skew Young diagram which does not contain any 2 by 2 square. A ribbon tableau of a Young diagram $\lambda$, of weight $\alpha = (\alpha_1, \ldots, \alpha_m)$, is a tiling of $\lambda$ with ribbons $\rho_1, \ldots, \rho_m$ of respective sizes $\alpha_1, \ldots, \alpha_m$ such that, filling the boxes of each $\rho_i$ with the number $i$, the resulting filling of $\lambda$ is (weakly) increasing along rows and columns. See Figure~\ref{fig: two examples of ribbon tableaux} for an example. We denote the set of such ribbon tableaux by $\RT(\lambda, \alpha)$. The same definition extends to skew diagrams.

\begin{figure}[!ht]
\ytableausetup{boxsize=1.1em}
    \begin{center}
\begin{ytableau}
    \none   &*(yellow!60)3&*(yellow!60)3 \\
    \none   &*(orange!60)2&*(yellow!60)3&*(yellow!60)3&*(cyan!60)5&*(violet!60)7 \\
    \none   &*(orange!60)2&*(orange!60)2&*(yellow!60)3&*(green!60)4&*(green!60)4&*(green!60)4&*(blue!60)6 \\
    \none   &*(red!60)1&*(orange!60)2&*(orange!60)2&*(orange!60)2&*(orange!60)2&*(green!60)4&*(blue!60)6&*(blue!60)6 \\
\end{ytableau}
\quad \quad
\begin{ytableau}
    \none   &*(orange!60)2&*(yellow!60)3 \\
    \none   &*(orange!60)2&*(yellow!60)3&*(yellow!60)3&*(green!60)4&*(green!60)4 \\
    \none   &*(orange!60)2&*(orange!60)2&*(yellow!60)3&*(yellow!60)3&*(green!60)4&*(blue!60)6&*(blue!60)6 \\
    \none   &*(red!60)1&*(orange!60)2&*(orange!60)2&*(orange!60)2&*(green!60)4&*(cyan!60)5&*(blue!60)6&*(violet!60)7 \\
\end{ytableau}
\end{center}
\caption{Two ribbon tableaux of $\lambda = [8,7,5,2]$ with weight $\alpha = (1,7,5,4,1,3,1)$. Colors are redundant.}
\label{fig: two examples of ribbon tableaux}
\end{figure}

One important use of ribbon tableaux is to bound characters. Indeed, the Murnaghan--Nakayama rule (\cite[Theorem 3.10]{LivreMeliot2017RepresentationTheoryofSymmetricGroups}) enables computing characters $\ch^\lambda(\sigma)$ as a signed sum over all ribbon tableaux $\RT(\lambda, \alpha)$, where $\alpha$ represents the cycle type of $\sigma$. 

To be more specific, the height of a ribbon is the coordinate of its highest box minus the coordinate of its lowest box, and the height of a ribbon tableau $T$, which we denote by $\height(T)$, is the sum of the heights of all its ribbons.
The Murnaghan--Nakayama rule then states that for every Young diagram $\lambda$ and any $\sigma \in \kS_{|\lambda|}$, denoting by $\alpha$ \textit{any} ordering  of the cycle lengths of $\sigma$, we have
\begin{equation}
    \ch^\lambda(\sigma) = \sum_{T\in \RT(\lambda, \alpha)} (-1)^{\height(T)}.
\end{equation}
Therefore, by a triangle inequality, we have
\begin{equation}\label{eq: MN with triangle inequality and min}
    \bg \ch^\lambda(\sigma)\bd \leq \min_\alpha \bg \RT(\lambda, \alpha)\bd,
\end{equation}
where the minimum is over all possible orderings of the cycles of $\sigma$.

\medskip

A possible way to bound characters is therefore to find a \textit{good} $\alpha$, that is an $\alpha$ for which the ribbon tableaux have a geometric structure that may be analysed, and for which $\bg \RT(\lambda, \alpha)\bd$ is as small as possible.

\medskip

We refer to the companion paper \cite[Section 3.1]{Teyssier2025SkewDimensionsAndCharacters} for a detailed exposition on the Murnaghan--Nakayama rule with several additional figures.

\subsection{What is problematic with typical orderings and tableaux shapes}
Bounding characters via \eqref{eq: MN with triangle inequality and min} has been done many times in the past. Depending on the shapes $\lambda$ and permutations $\sigma$, this led to some very good bounds and some weaker ones.

\medskip

A first possible choice of $\alpha$ is to give a low index to larger ribbons. Schematically, this gives ribbon tableaux as in Figure~\ref{fig: corner ribbon peeling}.

\begin{figure}[!ht]
    \centering
{
\ytableausetup{boxsize=0.16em, boxframe = 0.15pt}
\color{black!50}
\begin{ytableau}
    \none & \\
    \none && \\
    \none &&& \\
    \none &&&& \\
    \none &&&&& \\
    \none &&&&&& \\
    \none &&&&&&& \\
    \none &&&&&&&& \\
    \none &&&&&&&&& \\
    \none &&&&&&&&&& \\
    \none &&&&&&&&&&& \\
    \none &&&&&&&&&&&& \\
    \none &&&&&&&&&&&&& \\
    \none &&&&&&&&&&&&&& \\
    \none &&&&&&&&&&&&&&& \\
    \none &&&&&&&&&&&&&&&& \\
    \none &&&&&&&&&&&&&&&&& \\
    \none &&&&&&&&&&&&&&&&&& \\
    \none &&&&&&&&&&&&&&&&&&& \\
    \none &&&&&&&&&&&&&&&&&&&& \\
    \none &&&&&&&&&&&&&&&&&&&&& \\
    \none &&&&&&&&&&&&&&&&&&&&&& \\
    \none &&&&&&&&&&&&&&&&&&&&&&& \\
    \none &&&&&&&&&&&&&&&&&&&&&&&& \\
    \none &*(darkgray)&&&&&&&&&&&&&&&&&&&&&&&& \\
    \none &*(darkgray)&*(darkgray)&&&&&&&&&&&&&&&&&&&&&&&& \\
    \none &*(darkgray)&*(darkgray)&*(darkgray)&&&&&&&&&&&&&&&&&&&&&&&& \\
    \none &*(darkgray)&*(darkgray)&*(darkgray)&*(darkgray)&&&&&&&&&&&&&&&&&&&&&&&& \\
    \none&*(darkgray)&*(darkgray)&*(darkgray)&*(darkgray)&*(darkgray)&&&&&&&&&&&&&&&&&&&&&&&& \\
    \none &*(darkgray)&*(darkgray)&*(darkgray)&*(darkgray)&*(darkgray)&*(darkgray)&&&&&&&&&&&&&&&&&&&&&&&& \\
    \none &*(darkgray)&*(darkgray)&*(darkgray)&*(darkgray)&*(darkgray)&*(darkgray)&*(darkgray)&&&&&&&&&&&&&&&&&&&&&&&& \\
    \none &*(darkgray)&*(darkgray)&*(darkgray)&*(darkgray)&*(darkgray)&*(darkgray)&*(darkgray)&*(darkgray)&&&&&&&&&&&&&&&&&&&&&&&& \\
    \none &*(darkgray)&*(darkgray)&*(darkgray)&*(darkgray)&*(darkgray)&*(darkgray)&*(darkgray)&*(darkgray)&*(darkgray)&&&&&&&&&&&&&&&&&&&&&&&& \\
    \none &*(darkgray)&*(darkgray)&*(darkgray)&*(darkgray)&*(darkgray)&*(darkgray)&*(darkgray)&*(darkgray)&*(darkgray)&*(darkgray)&&&&&&&&&&&&&&&&&&&&&&&& \\
    \none &*(darkgray)&*(darkgray)&*(darkgray)&*(darkgray)&*(darkgray)&*(darkgray)&*(darkgray)&*(darkgray)&*(darkgray)&*(darkgray)&*(darkgray)&&&&&&&&&&&&&&&&&&&&&&&& \\
    \none &*(darkgray)&*(darkgray)&*(darkgray)&*(darkgray)&*(darkgray)&*(darkgray)&*(darkgray)&*(darkgray)&*(darkgray)&*(darkgray)&*(darkgray)&*(darkgray)&&&&&&&&&&&&&&&&&&&&&&&&\\
    \none &*(darkgray)&*(darkgray)&*(darkgray)&*(darkgray)&*(darkgray)&*(darkgray)&*(darkgray)&*(darkgray)&*(darkgray)&*(darkgray)&*(darkgray)&*(darkgray)&*(darkgray)&&&&&&&&&&&&&&&&&&&&&&&& \\
    \none &*(darkgray)&*(darkgray)&*(darkgray)&*(darkgray)&*(darkgray)&*(darkgray)&*(darkgray)&*(darkgray)&*(darkgray)&*(darkgray)&*(darkgray)&*(darkgray)&*(darkgray)&*(darkgray)&&&&&&&&&&&&&&&&&&&&&&&& \\
    \none &*(darkgray)&*(darkgray)&*(darkgray)&*(darkgray)&*(darkgray)&*(darkgray)&*(darkgray)&*(darkgray)&*(darkgray)&*(darkgray)&*(darkgray)&*(darkgray)&*(darkgray)&*(darkgray)&*(darkgray)&&&&&&&&&&&&&&&&&&&&&&&& \\
    \none &*(darkgray)&*(darkgray)&*(darkgray)&*(darkgray)&*(darkgray)&*(darkgray)&*(darkgray)&*(darkgray)&*(darkgray)&*(darkgray)&*(darkgray)&*(darkgray)&*(darkgray)&*(darkgray)&*(darkgray)&*(darkgray)&&&&&&&&&&&&&&&&&&&&&&&& \\
\end{ytableau}
\quad \quad \quad \quad \quad
\begin{ytableau}
    \none &&&&&&&&&&&&&&&&&&&&&&&&&&&&&&&&&&&&&&&& \\
    \none &&&&&&&&&&&&&&&&&&&&&&&&&&&&&&&&&&&&&&&& \\
    \none &&&&&&&&&&&&&&&&&&&&&&&&&&&&&&&&&&&&&&&& \\
    \none &&&&&&&&&&&&&&&&&&&&&&&&&&&&&&&&&&&&&&&& \\
    \none &&&&&&&&&&&&&&&&&&&&&&&&&&&&&&&&&&&&&&&& \\
    \none &&&&&&&&&&&&&&&&&&&&&&&&&&&&&&&&&&&&&&&& \\
    \none &&&&&&&&&&&&&&&&&&&&&&&&&&&&&&&&&&&&&&&& \\
    \none &&&&&&&&&&&&&&&&&&&&&&&&&&&&&&&&&&&&&&&& \\
    \none &&&&&&&&&&&&&&&&&&&&&&&&&&&&&&&&&&&&&&&& \\
    \none &&&&&&&&&&&&&&&&&&&&&&&&&&&&&&&&&&&&&&&& \\
    \none &&&&&&&&&&&&&&&&&&&&&&&&&&&&&&&&&&&&&&&& \\
    \none &&&&&&&&&&&&&&&&&&&&&&&&&&&&&&&&&&&&&&&& \\
    \none &&&&&&&&&&&&&&&&&&&&&&&&&&&&&&&&&&&&&&&& \\
    \none &&&&&&&&&&&&&&&&&&&&&&&&&&&&&&&&&&&&&&&& \\
    \none &&&&&&&&&&&&&&&&&&&&&&&&&&&&&&&&&&&&&&&& \\
    \none &&&&&&&&&&&&&&&&&&&&&&&&&&&&&&&&&&&&&&&& \\
    \none &&&&&&&&&&&&&&&&&&&&&&&&&&&&&&&&&&&&&&&& \\
    \none &&&&&&&&&&&&&&&&&&&&&&&&&&&&&&&&&&&&&&&& \\
    \none &&&&&&&&&&&&&&&&&&&&&&&&&&&&&&&&&&&&&&&& \\
    \none &&&&&&&&&&&&&&&&&&&&&&&&&&&&&&&&&&&&&&&& \\
    \none &&&&&&&&&&&&&&&&&&&&&&&&&&&&&&&&&&&&&&&& \\
    \none &&&&&&&&&&&&&&&&&&&&&&&&&&&&&&&&&&&&&&&& \\
    \none &&&&&&&&&&&&&&&&&&&&&&&&&&&&&&&&&&&&&&&& \\
    \none &&&&&&&&&&&&&&&&&&&&&&&&&&&&&&&&&&&&&&&& \\
    \none &*(darkgray)&&&&&&&&&&&&&&&&&&&&&&&&&&&&&&&&&&&&&&& \\
    \none &*(darkgray)&*(darkgray)&&&&&&&&&&&&&&&&&&&&&&&&&&&&&&&&&&&&&& \\
    \none &*(darkgray)&*(darkgray)&*(darkgray)&&&&&&&&&&&&&&&&&&&&&&&&&&&&&&&&&&&&& \\
    \none &*(darkgray)&*(darkgray)&*(darkgray)&*(darkgray)&&&&&&&&&&&&&&&&&&&&&&&&&&&&&&&&&&&& \\
    \none &*(darkgray)&*(darkgray)&*(darkgray)&*(darkgray)&*(darkgray)&&&&&&&&&&&&&&&&&&&&&&&&&&&&&&&&&&& \\
    \none &*(darkgray)&*(darkgray)&*(darkgray)&*(darkgray)&*(darkgray)&*(darkgray)&&&&&&&&&&&&&&&&&&&&&&&&&&&&&&&&&& \\
    \none &*(darkgray)&*(darkgray)&*(darkgray)&*(darkgray)&*(darkgray)&*(darkgray)&*(darkgray)&&&&&&&&&&&&&&&&&&&&&&&&&&&&&&&&& \\
    \none &*(darkgray)&*(darkgray)&*(darkgray)&*(darkgray)&*(darkgray)&*(darkgray)&*(darkgray)&*(darkgray)&&&&&&&&&&&&&&&&&&&&&&&&&&&&&&&& \\
    \none &*(darkgray)&*(darkgray)&*(darkgray)&*(darkgray)&*(darkgray)&*(darkgray)&*(darkgray)&*(darkgray)&*(darkgray)&&&&&&&&&&&&&&&&&&&&&&&&&&&&&&& \\
    \none &*(darkgray)&*(darkgray)&*(darkgray)&*(darkgray)&*(darkgray)&*(darkgray)&*(darkgray)&*(darkgray)&*(darkgray)&*(darkgray)&&&&&&&&&&&&&&&&&&&&&&&&&&&&&& \\
    \none &*(darkgray)&*(darkgray)&*(darkgray)&*(darkgray)&*(darkgray)&*(darkgray)&*(darkgray)&*(darkgray)&*(darkgray)&*(darkgray)&*(darkgray)&&&&&&&&&&&&&&&&&&&&&&&&&&&&& \\
    \none &*(darkgray)&*(darkgray)&*(darkgray)&*(darkgray)&*(darkgray)&*(darkgray)&*(darkgray)&*(darkgray)&*(darkgray)&*(darkgray)&*(darkgray)&*(darkgray)&&&&&&&&&&&&&&&&&&&&&&&&&&&& \\
    \none &*(darkgray)&*(darkgray)&*(darkgray)&*(darkgray)&*(darkgray)&*(darkgray)&*(darkgray)&*(darkgray)&*(darkgray)&*(darkgray)&*(darkgray)&*(darkgray)&*(darkgray)&&&&&&&&&&&&&&&&&&&&&&&&&&& \\
    \none &*(darkgray)&*(darkgray)&*(darkgray)&*(darkgray)&*(darkgray)&*(darkgray)&*(darkgray)&*(darkgray)&*(darkgray)&*(darkgray)&*(darkgray)&*(darkgray)&*(darkgray)&*(darkgray)&&&&&&&&&&&&&&&&&&&&&&&&&& \\
    \none &*(darkgray)&*(darkgray)&*(darkgray)&*(darkgray)&*(darkgray)&*(darkgray)&*(darkgray)&*(darkgray)&*(darkgray)&*(darkgray)&*(darkgray)&*(darkgray)&*(darkgray)&*(darkgray)&*(darkgray)&&&&&&&&&&&&&&&&&&&&&&&&& \\
    \none &*(darkgray)&*(darkgray)&*(darkgray)&*(darkgray)&*(darkgray)&*(darkgray)&*(darkgray)&*(darkgray)&*(darkgray)&*(darkgray)&*(darkgray)&*(darkgray)&*(darkgray)&*(darkgray)&*(darkgray)&*(darkgray)&&&&&&&&&&&&&&&&&&&&&&&& \\
\end{ytableau}
}
    \caption{The Young diagrams $\lambda = [40, 39, \ldots, 2, 1]$ and $\lambda = [40^{40}]$. In dark gray are possible positions for non-trivial ribbons, typically clustered if the ribbons are small.}
    \label{fig: corner ribbon peeling}
\end{figure}

We tried this approach in \cite{Teyssier2025SkewDimensionsAndCharacters}. This choice of $\alpha$ leads to an \textit{unsigned} geometry which is tractable, but the triangle inequality in \eqref{eq: MN with triangle inequality and min} loses too much information. Another way to state this is that to improve the bounds with such an $\alpha$ would require understanding the sign compensations in the Murnaghan--Nakayama rule.

\medskip

Another possible choice of $\alpha$ is to give a large index to larger ribbons. Schematically, this gives ribbon tableaux as in Figure~\ref{fig: external ribbon peeling}.

\begin{figure}[!ht]
    \centering
{
\ytableausetup{boxsize=0.16em, boxframe = 0.15pt}
\color{black!50}
\begin{ytableau}
    \none &*(darkgray) \\
    \none &*(darkgray)&*(darkgray) \\
    \none &*(darkgray)&*(darkgray)&*(darkgray) \\
    \none &&*(darkgray)&*(darkgray)&*(darkgray) \\
    \none &&&*(darkgray)&*(darkgray)&*(darkgray) \\
    \none &&&&*(darkgray)&*(darkgray)&*(darkgray) \\
    \none &&&&&*(darkgray)&*(darkgray)&*(darkgray) \\
    \none &&&&&&*(darkgray)&*(darkgray)&*(darkgray) \\
    \none &&&&&&&*(darkgray)&*(darkgray)&*(darkgray) \\
    \none &&&&&&&&*(darkgray)&*(darkgray)&*(darkgray) \\
    \none &&&&&&&&&*(darkgray)&*(darkgray)&*(darkgray) \\
    \none &&&&&&&&&&*(darkgray)&*(darkgray)&*(darkgray) \\
    \none &&&&&&&&&&&*(darkgray)&*(darkgray)&*(darkgray) \\
    \none &&&&&&&&&&&&*(darkgray)&*(darkgray)&*(darkgray) \\
    \none &&&&&&&&&&&&&*(darkgray)&*(darkgray)&*(darkgray) \\
    \none &&&&&&&&&&&&&&*(darkgray)&*(darkgray)&*(darkgray) \\
    \none &&&&&&&&&&&&&&&*(darkgray)&*(darkgray)&*(darkgray) \\
    \none &&&&&&&&&&&&&&&&*(darkgray)&*(darkgray)&*(darkgray) \\
    \none &&&&&&&&&&&&&&&&&*(darkgray)&*(darkgray)&*(darkgray) \\
    \none &&&&&&&&&&&&&&&&&&*(darkgray)&*(darkgray)&*(darkgray) \\
    \none &&&&&&&&&&&&&&&&&&&*(darkgray)&*(darkgray)&*(darkgray) \\
    \none &&&&&&&&&&&&&&&&&&&&*(darkgray)&*(darkgray)&*(darkgray) \\
    \none &&&&&&&&&&&&&&&&&&&&&*(darkgray)&*(darkgray)&*(darkgray) \\
    \none &&&&&&&&&&&&&&&&&&&&&&*(darkgray)&*(darkgray)&*(darkgray) \\
    \none &&&&&&&&&&&&&&&&&&&&&&&*(darkgray)&*(darkgray)&*(darkgray) \\
    \none &&&&&&&&&&&&&&&&&&&&&&&&*(darkgray)&*(darkgray)&*(darkgray) \\
    \none &&&&&&&&&&&&&&&&&&&&&&&&&*(darkgray)&*(darkgray)&*(darkgray) \\
    \none &&&&&&&&&&&&&&&&&&&&&&&&&&*(darkgray)&*(darkgray)&*(darkgray) \\
    \none &&&&&&&&&&&&&&&&&&&&&&&&&&&*(darkgray)&*(darkgray)&*(darkgray) \\
    \none &&&&&&&&&&&&&&&&&&&&&&&&&&&&*(darkgray)&*(darkgray)&*(darkgray) \\
    \none &&&&&&&&&&&&&&&&&&&&&&&&&&&&&*(darkgray)&*(darkgray)&*(darkgray) \\
    \none &&&&&&&&&&&&&&&&&&&&&&&&&&&&&&*(darkgray)&*(darkgray)&*(darkgray) \\
    \none &&&&&&&&&&&&&&&&&&&&&&&&&&&&&&&*(darkgray)&*(darkgray)&*(darkgray) \\
    \none &&&&&&&&&&&&&&&&&&&&&&&&&&&&&&&&*(darkgray)&*(darkgray)&*(darkgray) \\
    \none &&&&&&&&&&&&&&&&&&&&&&&&&&&&&&&&&*(darkgray)&*(darkgray)&*(darkgray) \\
    \none &&&&&&&&&&&&&&&&&&&&&&&&&&&&&&&&&&*(darkgray)&*(darkgray)&*(darkgray) \\
    \none &&&&&&&&&&&&&&&&&&&&&&&&&&&&&&&&&&&*(darkgray)&*(darkgray)&*(darkgray) \\
    \none &&&&&&&&&&&&&&&&&&&&&&&&&&&&&&&&&&&&*(darkgray)&*(darkgray)&*(darkgray) \\
    \none &&&&&&&&&&&&&&&&&&&&&&&&&&&&&&&&&&&&&*(darkgray)&*(darkgray)&*(darkgray) \\
    \none &&&&&&&&&&&&&&&&&&&&&&&&&&&&&&&&&&&&&&*(darkgray)&*(darkgray)&*(darkgray) \\
\end{ytableau}
\quad \quad \quad \quad \quad
\begin{ytableau}
    \none &&&&&&&&&&&&&&&&&&&&&&&&&*(darkgray)&*(darkgray)&*(darkgray)&*(darkgray)&*(darkgray)&*(darkgray)&*(darkgray)&*(darkgray)&*(darkgray)&*(darkgray)&*(darkgray)&*(darkgray)&*(darkgray)&*(darkgray)&*(darkgray)&*(darkgray) \\
    \none &&&&&&&&&&&&&&&&&&&&&&&&&&*(darkgray)&*(darkgray)&*(darkgray)&*(darkgray)&*(darkgray)&*(darkgray)&*(darkgray)&*(darkgray)&*(darkgray)&*(darkgray)&*(darkgray)&*(darkgray)&*(darkgray)&*(darkgray)&*(darkgray) \\
    \none &&&&&&&&&&&&&&&&&&&&&&&&&&&*(darkgray)&*(darkgray)&*(darkgray)&*(darkgray)&*(darkgray)&*(darkgray)&*(darkgray)&*(darkgray)&*(darkgray)&*(darkgray)&*(darkgray)&*(darkgray)&*(darkgray)&*(darkgray) \\
    \none &&&&&&&&&&&&&&&&&&&&&&&&&&&&*(darkgray)&*(darkgray)&*(darkgray)&*(darkgray)&*(darkgray)&*(darkgray)&*(darkgray)&*(darkgray)&*(darkgray)&*(darkgray)&*(darkgray)&*(darkgray)&*(darkgray) \\
    \none &&&&&&&&&&&&&&&&&&&&&&&&&&&&&*(darkgray)&*(darkgray)&*(darkgray)&*(darkgray)&*(darkgray)&*(darkgray)&*(darkgray)&*(darkgray)&*(darkgray)&*(darkgray)&*(darkgray)&*(darkgray) \\
    \none &&&&&&&&&&&&&&&&&&&&&&&&&&&&&&*(darkgray)&*(darkgray)&*(darkgray)&*(darkgray)&*(darkgray)&*(darkgray)&*(darkgray)&*(darkgray)&*(darkgray)&*(darkgray)&*(darkgray) \\
    \none &&&&&&&&&&&&&&&&&&&&&&&&&&&&&&&*(darkgray)&*(darkgray)&*(darkgray)&*(darkgray)&*(darkgray)&*(darkgray)&*(darkgray)&*(darkgray)&*(darkgray)&*(darkgray) \\
    \none &&&&&&&&&&&&&&&&&&&&&&&&&&&&&&&&*(darkgray)&*(darkgray)&*(darkgray)&*(darkgray)&*(darkgray)&*(darkgray)&*(darkgray)&*(darkgray)&*(darkgray) \\
    \none &&&&&&&&&&&&&&&&&&&&&&&&&&&&&&&&&*(darkgray)&*(darkgray)&*(darkgray)&*(darkgray)&*(darkgray)&*(darkgray)&*(darkgray)&*(darkgray) \\
    \none &&&&&&&&&&&&&&&&&&&&&&&&&&&&&&&&&&*(darkgray)&*(darkgray)&*(darkgray)&*(darkgray)&*(darkgray)&*(darkgray)&*(darkgray) \\
    \none &&&&&&&&&&&&&&&&&&&&&&&&&&&&&&&&&&&*(darkgray)&*(darkgray)&*(darkgray)&*(darkgray)&*(darkgray)&*(darkgray) \\
    \none &&&&&&&&&&&&&&&&&&&&&&&&&&&&&&&&&&&&*(darkgray)&*(darkgray)&*(darkgray)&*(darkgray)&*(darkgray) \\
    \none &&&&&&&&&&&&&&&&&&&&&&&&&&&&&&&&&&&&&*(darkgray)&*(darkgray)&*(darkgray)&*(darkgray) \\
    \none &&&&&&&&&&&&&&&&&&&&&&&&&&&&&&&&&&&&&&*(darkgray)&*(darkgray)&*(darkgray) \\
    \none &&&&&&&&&&&&&&&&&&&&&&&&&&&&&&&&&&&&&&&*(darkgray)&*(darkgray) \\
    \none &&&&&&&&&&&&&&&&&&&&&&&&&&&&&&&&&&&&&&&&*(darkgray) \\
    \none &&&&&&&&&&&&&&&&&&&&&&&&&&&&&&&&&&&&&&&& \\
    \none &&&&&&&&&&&&&&&&&&&&&&&&&&&&&&&&&&&&&&&& \\
    \none &&&&&&&&&&&&&&&&&&&&&&&&&&&&&&&&&&&&&&&& \\
    \none &&&&&&&&&&&&&&&&&&&&&&&&&&&&&&&&&&&&&&&& \\
    \none &&&&&&&&&&&&&&&&&&&&&&&&&&&&&&&&&&&&&&&& \\
    \none &&&&&&&&&&&&&&&&&&&&&&&&&&&&&&&&&&&&&&&& \\
    \none &&&&&&&&&&&&&&&&&&&&&&&&&&&&&&&&&&&&&&&& \\
    \none &&&&&&&&&&&&&&&&&&&&&&&&&&&&&&&&&&&&&&&& \\
    \none &&&&&&&&&&&&&&&&&&&&&&&&&&&&&&&&&&&&&&&& \\
    \none &&&&&&&&&&&&&&&&&&&&&&&&&&&&&&&&&&&&&&&& \\
    \none &&&&&&&&&&&&&&&&&&&&&&&&&&&&&&&&&&&&&&&& \\
    \none &&&&&&&&&&&&&&&&&&&&&&&&&&&&&&&&&&&&&&&& \\
    \none &&&&&&&&&&&&&&&&&&&&&&&&&&&&&&&&&&&&&&&& \\
    \none &&&&&&&&&&&&&&&&&&&&&&&&&&&&&&&&&&&&&&&& \\
    \none &&&&&&&&&&&&&&&&&&&&&&&&&&&&&&&&&&&&&&&& \\
    \none &&&&&&&&&&&&&&&&&&&&&&&&&&&&&&&&&&&&&&&& \\
    \none &&&&&&&&&&&&&&&&&&&&&&&&&&&&&&&&&&&&&&&& \\
    \none &&&&&&&&&&&&&&&&&&&&&&&&&&&&&&&&&&&&&&&& \\
    \none &&&&&&&&&&&&&&&&&&&&&&&&&&&&&&&&&&&&&&&& \\
    \none &&&&&&&&&&&&&&&&&&&&&&&&&&&&&&&&&&&&&&&& \\
    \none &&&&&&&&&&&&&&&&&&&&&&&&&&&&&&&&&&&&&&&& \\
    \none &&&&&&&&&&&&&&&&&&&&&&&&&&&&&&&&&&&&&&&& \\
    \none &&&&&&&&&&&&&&&&&&&&&&&&&&&&&&&&&&&&&&&& \\
    \none &&&&&&&&&&&&&&&&&&&&&&&&&&&&&&&&&&&&&&&& \\
\end{ytableau}
}
    \caption{The Young diagrams $\lambda = [40, 39, \ldots, 2, 1]$ and $\lambda = [40^{40}]$. In dark gray are possible positions for non-trivial ribbons, typically spread on the left but clustered on the right, if the ribbons are small.}
    \label{fig: external ribbon peeling}
\end{figure}

With this approach, if the diagram $\lambda$ is not \textit{too degenerate}, the non-trivial ribbons spread nicely -- along the \textit{hypotenuse} for triangular shapes -- and make some analysis possible. However, for squares the non-trivial ribbons are clustered, and we are brought to the same issue as in the first choice of $\alpha$.

\subsection{A solution: river ribbon tableaux}

Ideally, we would want the non-trivial ribbons not to be clustered, as in the triangular case in Figure~\ref{fig: external ribbon peeling}, but also when $\lambda$ is a square.
The solution we found is to sandwich the non-trivial ribbons between two blocks of single boxes (i.e.\ ribbons of length 1), which we expect to look schematically as in Figure~\ref{fig: river ribbon peeling}.

\begin{figure}[!ht]
    \centering
{
\ytableausetup{boxsize=0.16em, boxframe = 0.15pt}
\color{black!50}
\begin{ytableau}
    \none & \\
    \none && \\
    \none &&& \\
    \none &&&& \\
    \none &&&&& \\
    \none &&&&&& \\
    \none &&&&&&& \\
    \none &&&&&&&& \\
    \none &&&&&&&&& \\
    \none &&&&&&&&&& \\
    \none &*(darkgray)&&&&&&&&&& \\
    \none &*(darkgray)&*(darkgray)&&&&&&&&&& \\
    \none &*(darkgray)&*(darkgray)&*(darkgray)&&&&&&&&&& \\
    \none &*(darkgray)&*(darkgray)&*(darkgray)&*(darkgray)&&&&&&&&&& \\
    \none &&*(darkgray)&*(darkgray)&*(darkgray)&*(darkgray)&&&&&&&&&& \\
    \none &&&*(darkgray)&*(darkgray)&*(darkgray)&*(darkgray)&&&&&&&&&& \\
    \none &&&&*(darkgray)&*(darkgray)&*(darkgray)&*(darkgray)&&&&&&&&&& \\
    \none &&&&&*(darkgray)&*(darkgray)&*(darkgray)&*(darkgray)&&&&&&&&&& \\
    \none &&&&&&*(darkgray)&*(darkgray)&*(darkgray)&*(darkgray)&&&&&&&&&& \\
    \none &&&&&&&*(darkgray)&*(darkgray)&*(darkgray)&*(darkgray)&&&&&&&&&& \\
    \none &&&&&&&&*(darkgray)&*(darkgray)&*(darkgray)&*(darkgray)&&&&&&&&&& \\
    \none &&&&&&&&&*(darkgray)&*(darkgray)&*(darkgray)&*(darkgray)&&&&&&&&&& \\
    \none &&&&&&&&&&*(darkgray)&*(darkgray)&*(darkgray)&*(darkgray)&&&&&&&&&& \\
    \none &&&&&&&&&&&*(darkgray)&*(darkgray)&*(darkgray)&*(darkgray)&&&&&&&&&& \\
    \none &&&&&&&&&&&&*(darkgray)&*(darkgray)&*(darkgray)&*(darkgray)&&&&&&&&&& \\
    \none &&&&&&&&&&&&&*(darkgray)&*(darkgray)&*(darkgray)&*(darkgray)&&&&&&&&&& \\
    \none &&&&&&&&&&&&&&*(darkgray)&*(darkgray)&*(darkgray)&*(darkgray)&&&&&&&&&& \\
    \none &&&&&&&&&&&&&&&*(darkgray)&*(darkgray)&*(darkgray)&*(darkgray)&&&&&&&&&& \\
    \none &&&&&&&&&&&&&&&&*(darkgray)&*(darkgray)&*(darkgray)&*(darkgray)&&&&&&&&&& \\
    \none &&&&&&&&&&&&&&&&&*(darkgray)&*(darkgray)&*(darkgray)&*(darkgray)&&&&&&&&&& \\
    \none &&&&&&&&&&&&&&&&&&*(darkgray)&*(darkgray)&*(darkgray)&*(darkgray)&&&&&&&&&& \\
    \none &&&&&&&&&&&&&&&&&&&*(darkgray)&*(darkgray)&*(darkgray)&*(darkgray)&&&&&&&&&& \\
    \none &&&&&&&&&&&&&&&&&&&&*(darkgray)&*(darkgray)&*(darkgray)&*(darkgray)&&&&&&&&&& \\
    \none &&&&&&&&&&&&&&&&&&&&&*(darkgray)&*(darkgray)&*(darkgray)&*(darkgray)&&&&&&&&&& \\
    \none &&&&&&&&&&&&&&&&&&&&&&*(darkgray)&*(darkgray)&*(darkgray)&*(darkgray)&&&&&&&&&& \\
    \none &&&&&&&&&&&&&&&&&&&&&&&*(darkgray)&*(darkgray)&*(darkgray)&*(darkgray)&&&&&&&&&& \\
    \none &&&&&&&&&&&&&&&&&&&&&&&&*(darkgray)&*(darkgray)&*(darkgray)&*(darkgray)&&&&&&&&&& \\
    \none &&&&&&&&&&&&&&&&&&&&&&&&&*(darkgray)&*(darkgray)&*(darkgray)&*(darkgray)&&&&&&&&&& \\
    \none &&&&&&&&&&&&&&&&&&&&&&&&&&*(darkgray)&*(darkgray)&*(darkgray)&*(darkgray)&&&&&&&&&& \\
    \none &&&&&&&&&&&&&&&&&&&&&&&&&&&*(darkgray)&*(darkgray)&*(darkgray)&*(darkgray)&&&&&&&&&& \\
\end{ytableau}
\quad \quad \quad \quad \quad
\begin{ytableau}
    \none &*(darkgray)&*(darkgray)&&&&&&&&&&&&&&&&&&&&&&&&&&&&&&&&&&&&&& \\
    \none &*(darkgray)&*(darkgray)&*(darkgray)&&&&&&&&&&&&&&&&&&&&&&&&&&&&&&&&&&&&& \\
    \none &&*(darkgray)&*(darkgray)&*(darkgray)&&&&&&&&&&&&&&&&&&&&&&&&&&&&&&&&&&&& \\
    \none &&&*(darkgray)&*(darkgray)&*(darkgray)&&&&&&&&&&&&&&&&&&&&&&&&&&&&&&&&&&& \\
    \none &&&&*(darkgray)&*(darkgray)&*(darkgray)&&&&&&&&&&&&&&&&&&&&&&&&&&&&&&&&&& \\
    \none &&&&&*(darkgray)&*(darkgray)&*(darkgray)&&&&&&&&&&&&&&&&&&&&&&&&&&&&&&&&& \\
    \none &&&&&&*(darkgray)&*(darkgray)&*(darkgray)&&&&&&&&&&&&&&&&&&&&&&&&&&&&&&&& \\
    \none &&&&&&&*(darkgray)&*(darkgray)&*(darkgray)&&&&&&&&&&&&&&&&&&&&&&&&&&&&&&& \\
    \none &&&&&&&&*(darkgray)&*(darkgray)&*(darkgray)&&&&&&&&&&&&&&&&&&&&&&&&&&&&&& \\
    \none &&&&&&&&&*(darkgray)&*(darkgray)&*(darkgray)&&&&&&&&&&&&&&&&&&&&&&&&&&&&& \\
    \none &&&&&&&&&&*(darkgray)&*(darkgray)&*(darkgray)&&&&&&&&&&&&&&&&&&&&&&&&&&&& \\
    \none &&&&&&&&&&&*(darkgray)&*(darkgray)&*(darkgray)&&&&&&&&&&&&&&&&&&&&&&&&&&& \\
    \none &&&&&&&&&&&&*(darkgray)&*(darkgray)&*(darkgray)&&&&&&&&&&&&&&&&&&&&&&&&&& \\
    \none &&&&&&&&&&&&&*(darkgray)&*(darkgray)&*(darkgray)&&&&&&&&&&&&&&&&&&&&&&&&& \\
    \none &&&&&&&&&&&&&&*(darkgray)&*(darkgray)&*(darkgray)&&&&&&&&&&&&&&&&&&&&&&&& \\
    \none &&&&&&&&&&&&&&&*(darkgray)&*(darkgray)&*(darkgray)&&&&&&&&&&&&&&&&&&&&&&& \\
    \none &&&&&&&&&&&&&&&&*(darkgray)&*(darkgray)&*(darkgray)&&&&&&&&&&&&&&&&&&&&&& \\
    \none &&&&&&&&&&&&&&&&&*(darkgray)&*(darkgray)&*(darkgray)&&&&&&&&&&&&&&&&&&&&& \\
    \none &&&&&&&&&&&&&&&&&&*(darkgray)&*(darkgray)&*(darkgray)&&&&&&&&&&&&&&&&&&&& \\
    \none &&&&&&&&&&&&&&&&&&&*(darkgray)&*(darkgray)&*(darkgray)&&&&&&&&&&&&&&&&&&& \\
    \none &&&&&&&&&&&&&&&&&&&&*(darkgray)&*(darkgray)&*(darkgray)&&&&&&&&&&&&&&&&&& \\
    \none &&&&&&&&&&&&&&&&&&&&&*(darkgray)&*(darkgray)&*(darkgray)&&&&&&&&&&&&&&&&& \\
    \none &&&&&&&&&&&&&&&&&&&&&&*(darkgray)&*(darkgray)&*(darkgray)&&&&&&&&&&&&&&&& \\
    \none &&&&&&&&&&&&&&&&&&&&&&&*(darkgray)&*(darkgray)&*(darkgray)&&&&&&&&&&&&&&& \\
    \none &&&&&&&&&&&&&&&&&&&&&&&&*(darkgray)&*(darkgray)&*(darkgray)&&&&&&&&&&&&&& \\
    \none &&&&&&&&&&&&&&&&&&&&&&&&&*(darkgray)&*(darkgray)&*(darkgray)&&&&&&&&&&&&& \\
    \none &&&&&&&&&&&&&&&&&&&&&&&&&&*(darkgray)&*(darkgray)&*(darkgray)&&&&&&&&&&&& \\
    \none &&&&&&&&&&&&&&&&&&&&&&&&&&&*(darkgray)&*(darkgray)&*(darkgray)&&&&&&&&&&& \\
    \none &&&&&&&&&&&&&&&&&&&&&&&&&&&&*(darkgray)&*(darkgray)&*(darkgray)&&&&&&&&&& \\
    \none &&&&&&&&&&&&&&&&&&&&&&&&&&&&&*(darkgray)&*(darkgray)&*(darkgray)&&&&&&&&& \\
    \none &&&&&&&&&&&&&&&&&&&&&&&&&&&&&&*(darkgray)&*(darkgray)&*(darkgray)&&&&&&&& \\
    \none &&&&&&&&&&&&&&&&&&&&&&&&&&&&&&&*(darkgray)&*(darkgray)&*(darkgray)&&&&&&& \\
    \none &&&&&&&&&&&&&&&&&&&&&&&&&&&&&&&&*(darkgray)&*(darkgray)&*(darkgray)&&&&&& \\
    \none &&&&&&&&&&&&&&&&&&&&&&&&&&&&&&&&&*(darkgray)&*(darkgray)&*(darkgray)&&&&& \\
    \none &&&&&&&&&&&&&&&&&&&&&&&&&&&&&&&&&&*(darkgray)&*(darkgray)&*(darkgray)&&&& \\
    \none &&&&&&&&&&&&&&&&&&&&&&&&&&&&&&&&&&&*(darkgray)&*(darkgray)&*(darkgray)&&& \\
    \none &&&&&&&&&&&&&&&&&&&&&&&&&&&&&&&&&&&&*(darkgray)&*(darkgray)&*(darkgray)&& \\
    \none &&&&&&&&&&&&&&&&&&&&&&&&&&&&&&&&&&&&&*(darkgray)&*(darkgray)&*(darkgray)& \\
    \none &&&&&&&&&&&&&&&&&&&&&&&&&&&&&&&&&&&&&&*(darkgray)&*(darkgray)&*(darkgray) \\
    \none &&&&&&&&&&&&&&&&&&&&&&&&&&&&&&&&&&&&&&&*(darkgray)&*(darkgray) \\
\end{ytableau}
}
    \caption{The Young diagrams $\lambda = [40, 39, \ldots, 2, 1]$ and $\lambda = [40^{40}]$. In dark gray are possible positions for non-trivial ribbons, typically spread on both sides if the ribbons are small.}
    \label{fig: river ribbon peeling}
\end{figure}

We think of the non-trivial ribbons as a river, and the single boxes as banks of the river. This motivates the following definition.

Let $n,p,q,k\geq 0$ be integers such that $p+k+q = n$, and let $r_1,\ldots, r_m\geq 2$ be integers such that $\sum_{i=1}^m r_i = k$. Let $\lambda \vdash n$. A \textbf{river ribbon tableau} with parameters $p,q, (r_i)_{1\leq i\leq m}$ is a ribbon tableau of $\RT(\lambda, \alpha)$, with $\alpha = (1^q, r_1, \ldots, r_m, 1^p)$.

\section{Some useful estimates}
\subsection{Number of peelable and addable ribbons}

We will need to count in how many ways ribbons can be removed or added to a given shape.
The first part of the following lemma is essentially  \cite[Lemma 3.5 (b)]{Teyssier2025SkewDimensionsAndCharacters} and the second extends it. The idea of the proof is the same, but we now present it with the metaphor of a snake climbing down a mountain, as in Figure~\ref{fig: snake metaphor}.

\begin{figure}[!ht]
    \centering
{
\ytableausetup{boxsize= 0.8em}
\begin{ytableau}
    \none &\\
    \none &&&&  \\
    \none &&&&&&&*(lightgray) {\scriptstyle T} &*(lightgray) \\
    \none &&&&&&&&*(lightgray) \\
    \none &&&&&&&&*(lightgray) \\
    \none &&&&&&&&*(lightgray)&*(lightgray) \\
    \none &&&&&&&&&*(lightgray)&*(lightgray)&*(lightgray)&*(lightgray) {\scriptstyle H}\\
    \none &&&&&&&&&&&&&&& \\
    \none &&&&&&&&&&&&&&&&&&& \\
\end{ytableau}
}
    \caption{A ribbon, represented in light gray, which may be thought as being either added or removed from a diagram. We see the ribbon as a snake climbing down a mountain. We fill the head and the tail of the snake with the letters $H$ and $T$, respectively.}
    \label{fig: snake metaphor}
\end{figure}

\begin{lemma}\label{lem: bound number of peelable and addable ribbons}
    Let $\eta\subseteq \lambda$ be two Young diagrams, and let $r\geq 2$. Denote again the diagonal length of $\lambda$ by $\delta = \delta(\lambda)$.
\begin{enumerate}
    \item There are at most $2\delta$ ways to remove an $r$-ribbon from $\eta$.
    \item There are at most $2\delta + r$ ways to add an $r$-ribbon to $\eta$, while staying in $\lambda$.
\end{enumerate}
\end{lemma}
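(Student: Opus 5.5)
Both parts come from one encoding of a ribbon by its \emph{head}, the lowest-rightmost box, which in the snake picture sits at the bottom of the mountain. A removable $r$-ribbon of $\eta$ is a connected piece of the outer rim of $\eta$ of size $r$, and it is determined by its head and $r$. Likewise, an addable $r$-ribbon is a connected piece of the rim of the outer strip of $\eta$ of size $r$, determined by its head box, which lies outside $\eta$. So in each case it is enough to bound the number of positions the head can take.

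\textbf{Part (a).} For a removable ribbon, the head is a box of the rim of $\eta$ whose right neighbour and lower neighbour are not in $\eta$; the snake can only go up or left from it. Its tail must be a box whose upper-left diagonal neighbour is absent from $\eta$ (or lies outside the quadrant), since removing the ribbon must leave a diagram. I will sort the possible ribbons by the diagonal (content) through the head or through the tail. The key point is that the rim of $\eta$ crosses each diagonal exactly once. A ribbon with $r\geq 2$ whose head is not at the end of a row must ``turn'' near the main diagonal region, and the admissible heads are at most the outer corners of $\eta$. The number of outer corners can be large, so corners alone do not suffice. Instead, I will use the condition that the box just above the tail (if $r$ is small) or the local geometry forces one of the endpoints to lie in a row or column of index at most $\delta$. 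Concretely, I claim that either the tail lies in the first $\delta(\eta)\le\delta$ rows, or the head lies in the first $\delta$ columns, with at most one ribbon per such row or column. This gives $2\delta$.

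\emph{Checking the claim.} A box $(i,j)$ of $\eta$ with $i>\delta$ and $j>\delta$ cannot exist, since the Durfee square has size $\delta(\eta)\le\delta$. Hence every box of $\eta$ is in a row $\le\delta$ or a column $\le\delta$. Take the tail $T$ of the ribbon:
\begin{itemize}
\item If $T$ lies in a row $i\le\delta$, then $T$ is the leftmost box of the ribbon within that row, and for a fixed row and size $r$ the ribbon is unique, because the rim of $\eta$ meets the row in a segment and the tail is the first rim box there that can start a ribbon. I expect this uniqueness to need care; it is the main obstacle.
\item Otherwise $T$ lies in a column $\le\delta$, and the symmetric argument applies with columns and the head. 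The statement of (a) suggests running this symmetric argument on the head rather than the tail.
\end{itemize}
The safer formulation is that each rim diagonal holds at most one head, and that a valid ribbon forces the head or the tail to lie within $\delta$ of the boundary axes. I would follow the argument of the cited Lemma 3.5(b).

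\textbf{Part (b).} I will run the same argument for the boxes of $\lambda\setminus\eta$ on the outer rim of $\eta$. Boxes of $\lambda$ still lie in rows $\le\delta$ or columns $\le\delta$, which gives the $2\delta$ term, except for one extra degenerate family. An added ribbon may stick out along the first row or first column beyond the region controlled by the Durfee square of $\lambda$: an addable ribbon can start on the empty boundary of the quadrant, where the snake reaches row 1 or column 1. The ribbons whose tail is in row $1$ or whose head is in column $1$ at ``fresh'' positions are parametrized by how many of the $r$ boxes lie in that line, which gives at most $r$ extra choices. Adding these yields $2\delta+r$. The main difficulty is making the injectivity (one ribbon per row or column index) rigorous, which I would do via the contents of the head and tail: these differ by $r-1$ plus the height, and this pins down the ribbon on the rim.
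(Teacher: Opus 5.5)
There is a genuine gap, starting in (a).

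\textbf{Part (a).} You take the head to be the bottom-right box, with rows counted from the bottom as in the snake figure. With that convention, your dichotomy ``the tail lies in one of the first $\delta$ rows or the head lies in one of the first $\delta$ columns'' is false. The whole hook $[3,1,1]$ is a removable $5$-ribbon with $\delta=1$, yet its tail is in row $3$ and its head in column $3$. Your uniqueness mechanism also fails. In $[2,2]$ both removable dominoes have their tail in row $2$, so ``one ribbon per row containing the tail'' is wrong. Your fallback, ``at most one head per rim diagonal'', only bounds the count by the number of diagonals, not by $2\delta$.

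The missing observation is what removability forces on the endpoints. The tail must be the \emph{top box of its column}, and the head the \emph{last box of its row}. Indeed, the box above the tail and the box right of the head have contents outside the ribbon's range, so they would otherwise survive the removal and break the diagram. Hence the column of the tail, or the row of the head, determines an endpoint and therefore the ribbon. These two indices cannot both exceed $\delta(\eta)$, since the head would then have both coordinates larger than $\delta(\eta)$. This gives $2\delta(\eta)\le 2\delta$, and it is the paper's argument.

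\textbf{Part (b).} Here the extra $r$ is attributed to the wrong family. For an \emph{added} ribbon, the orientation you wrote is actually the right one. The tail is the first empty box of its row and the head is the first empty box of its column. So ribbons whose tail is in a row $\le\delta(\eta)$, or whose head is in a column $\le\delta(\eta)$, number at most $2\delta(\eta)$.

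For the remaining ones, the tail has content $\le 0$ and the head has content $\ge 0$. So the ribbon contains the unique content-$0$ box of the outer rim of $\eta$, namely $(\delta(\eta)+1,\delta(\eta)+1)$. It is then determined by which of its $r$ boxes sits there, which is where the $+r$ comes from.

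These exceptional ribbons need not touch row $1$ or column $1$. Take $\eta=[4,2,1,1]\subseteq\lambda=[5,4,3,2,1]$ and the $5$-ribbon with boxes $(4,2),(3,2),(3,3),(2,3),(2,4)$, written as (row, column).
\begin{itemize}
\item It can be added to $\eta$ inside $\lambda$.
\item Its tail is in row $4$ and its head in column $4$, both larger than $\delta(\lambda)=3$.
\item So it is counted neither by your $2\delta$ term nor by your ``boundary of the quadrant'' family.
\end{itemize}
Your description of the extra family is only correct when $\eta=\emptyset$, where the diagonal box is $(1,1)$.
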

\begin{proof}
Each peelable ribbon has either its tail at the top of one of the first $\delta(\eta)$ columns, or its head at the end of one of the first $\delta(\eta)$ rows. Hence there are at most $2\delta(\eta)\leq 2\delta(\lambda)$ such ribbons, proving (a).

Let us now prove (b). There are three different ways to add a ribbon of length $r$. \\
(1) Choose $1\leq i\leq \delta(\eta)$ and place the tail of the ribbon at the end of the $i$-th row. \\
(2) Choose $1\leq j\leq \delta(\eta)$ and place the head of the ribbon at the top of the $j$-th column. \\
(3) Choose a box of the ribbon and place it in position $(\delta(\eta)+1, \delta(\eta)+1)$. 

In all three cases, the chosen box determines the position of the whole ribbon. There are therefore at most $2\delta(\eta) + r\leq 2\delta + r$ such possibilities, which concludes the proof.
\end{proof}

\subsection{Quotients of skew dimensions}
In this section we write $f(\eta) = d_{\eta}/|\eta|!$ if $\eta$ is a (skew) diagram, where $d_\eta$ is the number of standard tableaux of $\eta$, which we call \textit{dimension} of $\eta$ by extension. 
We start by stating the following generalized Björner inequality due to Chan and Pak. See \cite{Bjorner2011qAnalogueFKG} for the original inequality. 

\begin{proposition}[{\cite[Corollary 3.3]{ChanPak2023MultivariateInequalities}}]\label{prop: generalized Bjorner inequality}
Let $\mu \backslash \alpha$ and $\nu\backslash \beta$ be skew Young diagrams. Then $f((\mu\cup \nu) \backslash (\alpha\cup\beta)) \cdot f((\mu\cap\nu)\backslash(\alpha\cap\beta)) \geq f(\mu \backslash \alpha)\cdot f(\nu \backslash \beta)$.
\end{proposition}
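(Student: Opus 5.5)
The plan is to realise each quantity $f(\eta)$, for a skew shape $\eta$, as the limit of a normalised count of monotone integer fillings. These fillings form a distributive lattice under pointwise $\min$ and $\max$, so the inequality should follow from the Ahlswede--Daykin four functions theorem, applied to four indicator functions.

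\textbf{Step 1: discretisation.} Fix $N\ge 1$ and a large rectangle $R$ containing $\mu\cup\nu$. For skew $\mu\backslash\alpha$, let $A_N(\mu\backslash\alpha)$ be the set of maps $x\colon R\to\{0,1,\dots,N\}$ that are weakly increasing along rows and columns, with $x\equiv 0$ on $\alpha$ and $x\equiv N$ on $R\setminus\mu$. Since $\alpha$ and $\mu$ are order ideals, these boundary conditions are compatible with monotonicity, and the only free coordinates are the boxes of $\mu\backslash\alpha$. Maps with pairwise distinct values on $\mu\backslash\alpha$ are counted by $d_{\mu\backslash\alpha}\binom{N}{|\mu\backslash\alpha|}$, and maps with a coincidence contribute $O(N^{|\mu\backslash\alpha|-1})$. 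Hence
\[
|A_N(\mu\backslash\alpha)| = f(\mu\backslash\alpha)\,N^{|\mu\backslash\alpha|}\bigl(1+o(1)\bigr).
\]
(This is the discrete version of ``$f$ is the volume of the order polytope''.)

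\textbf{Step 2: lattice property.} Let $x\in A_N(\mu\backslash\alpha)$ and $y\in A_N(\nu\backslash\beta)$. Pointwise $\min$ and $\max$ of monotone maps are monotone. The map $x\wedge y$ vanishes wherever $x$ or $y$ vanishes, so on $\alpha\cup\beta$, and equals $N$ where both equal $N$, so on $R\setminus(\mu\cup\nu)$. Thus $x\wedge y\in A_N((\mu\cup\nu)\backslash(\alpha\cup\beta))$. Symmetrically, $x\vee y$ vanishes on $\alpha\cap\beta$ and equals $N$ on $R\setminus(\mu\cap\nu)$, so $x\vee y\in A_N((\mu\cap\nu)\backslash(\alpha\cap\beta))$.

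Monotone maps $R\to\{0,\dots,N\}$ form a finite distributive lattice. Take the four indicator functions of these four sets. Step 2 gives exactly the hypothesis $f_1(x)f_2(y)\le f_3(x\wedge y)f_4(x\vee y)$, so the four functions theorem yields
\[
|A_N(\mu\backslash\alpha)|\,|A_N(\nu\backslash\beta)|\le |A_N((\mu\cup\nu)\backslash(\alpha\cup\beta))|\,|A_N((\mu\cap\nu)\backslash(\alpha\cap\beta))|.
\]

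\textbf{Step 3: the limit.} By inclusion--exclusion applied to $\mu\cup\nu$, $\mu\cap\nu$, $\alpha\cup\beta$, $\alpha\cap\beta$,
\[
|\mu\backslash\alpha|+|\nu\backslash\beta| = |(\mu\cup\nu)\backslash(\alpha\cup\beta)|+|(\mu\cap\nu)\backslash(\alpha\cap\beta)|.
\]
So the powers of $N$ on both sides agree. Dividing by them and letting $N\to\infty$, Step 1 gives the claimed inequality.

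The main point needing care is Step 1 in degenerate cases. If some shape is not a valid skew shape, for example $\alpha\not\subseteq\mu$, then the corresponding set $A_N$ is empty and $f$ should be read as $0$; the asymptotic must be checked to be consistent with that convention. The dimension bookkeeping in Step 3 is exactly what makes the limit legitimate, and I would verify it explicitly.
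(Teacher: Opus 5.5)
Your proof is correct. The paper itself gives no argument for this proposition: it imports it as a black box from Chan--Pak, as a skew-shape extension of Björner's inequality (which Björner derived from a $q$-analogue of the FKG inequality). So your proposal supplies something the paper does not.

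Your route is the natural elementary one and belongs to the same family as the cited source, which concerns correlation inequalities for $P$-partitions. The key facts are:
\begin{enumerate}
\item $|A_N(\eta)|$ is the order polynomial of the skew shape $\eta$ evaluated at $N+1$.
\item The constraints linking $\eta$ to the rest of $R$ are vacuous, because $\alpha$ is a lower set and $R\setminus\mu$ is an upper set.
\item Bounded monotone fillings of $R$ form a finite distributive lattice under pointwise $\min$/$\max$, and the boundary conditions transform exactly as you check in Step 2.
\end{enumerate}
The four functions theorem applied to indicators then gives the inequality between the four counts for every $N$. By Step 3 both sides are polynomials in $N$ of the same degree, so your limit amounts to comparing leading coefficients.

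Your version buys self-containment at the cost of one paragraph. It also covers everything the paper uses, since only the nested case $\mu\subseteq\nu\subseteq\lambda\subseteq R$ enters, in the lemma immediately following. The citation buys the more general multivariate $P$-partition statements, which are not needed here.

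Two small corrections.
\begin{enumerate}
\item Fillings with values in $\{0,\dots,N\}$ that are injective on $\mu\backslash\alpha$ number $d_{\mu\backslash\alpha}\binom{N+1}{|\mu\backslash\alpha|}$, not $d_{\mu\backslash\alpha}\binom{N}{|\mu\backslash\alpha|}$. This does not affect the asymptotics.
\item The degenerate case you worry about cannot arise. From $\alpha\subseteq\mu$ and $\beta\subseteq\nu$ you get $\alpha\cup\beta\subseteq\mu\cup\nu$ and $\alpha\cap\beta\subseteq\mu\cap\nu$. Unions and intersections of Young diagrams are Young diagrams, so all four shapes are genuine skew shapes. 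These same inclusions are precisely what justify the size identity in Step 3.
\end{enumerate}
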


Assuming some inclusions between diagrams, Proposition~\ref{prop: generalized Bjorner inequality} simplifies as follows.
\begin{lemma}\label{lem: generalized Bjorner inequality rewritten with mu nu lambda R}
    Let $\mu \subseteq \nu \subseteq \lambda \subseteq R$ be four Young diagrams. Then
\begin{equation}
    \frac{f(\lambda \backslash \nu)}{f(\lambda \backslash \mu)}  \leq \frac{f(R \backslash \nu)}{f(R\backslash \mu)}.
\end{equation}
\end{lemma}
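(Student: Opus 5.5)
The plan is to apply Proposition~\ref{prop: generalized Bjorner inequality} once, to a well-chosen pair of skew diagrams, and then rearrange. Since all the quantities $f(\cdot)$ involved are positive, the claimed inequality is equivalent to the cross-multiplied form
\begin{equation*}
    f(\lambda\backslash\nu)\, f(R\backslash\mu) \leq f(\lambda\backslash\mu)\, f(R\backslash\nu).
\end{equation*}
This has exactly the shape of the generalized Björner inequality, with the right-hand side playing the role of the ``union times intersection'' term.

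I would take the two skew diagrams in Proposition~\ref{prop: generalized Bjorner inequality} to be $\lambda\backslash\nu$ and $R\backslash\mu$. That is, in the notation of the proposition, the outer shapes are $\lambda$ and $R$, and the inner shapes are $\nu$ and $\mu$. The inclusions $\mu\subseteq\nu\subseteq\lambda\subseteq R$ then give:
\begin{itemize}
    \item for the outer shapes, $\lambda\cup R = R$ and $\lambda\cap R=\lambda$;
    \item for the inner shapes, $\nu\cup\mu=\nu$ and $\nu\cap\mu=\mu$.
\end{itemize}
Hence the union skew diagram is $R\backslash\nu$ and the intersection skew diagram is $\lambda\backslash\mu$. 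Proposition~\ref{prop: generalized Bjorner inequality} then yields precisely the cross-multiplied inequality above.

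The only point needing care, and it is minor, is legitimacy. Each of $\lambda\backslash\nu$, $R\backslash\mu$, $R\backslash\nu$ and $\lambda\backslash\mu$ is a genuine skew diagram, because the inner shape is contained in the outer one. Each has at least one standard filling (the empty diagram having $d=1$), so all the $f$ values are strictly positive. In particular $f(\lambda\backslash\mu)$ and $f(R\backslash\mu)$ are nonzero, so we may divide by $f(\lambda\backslash\mu)f(R\backslash\mu)$ to obtain the stated ratio inequality.
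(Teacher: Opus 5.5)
Your proposal is correct and is exactly the paper's argument: Proposition~\ref{prop: generalized Bjorner inequality} is applied to the skew diagrams $\lambda\backslash\nu$ and $R\backslash\mu$, the chain $\mu\subseteq\nu\subseteq\lambda\subseteq R$ reduces the union and intersection terms to $R\backslash\nu$ and $\lambda\backslash\mu$, and the result is rearranged. Your positivity check, which justifies dividing by $f(\lambda\backslash\mu)f(R\backslash\mu)$, is a detail the paper leaves implicit.
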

\begin{proof}
We have $f((\lambda \cup R)\backslash(\nu\cup \mu)) \cdot f((\lambda \cap R)\backslash(\nu\cap \mu)) \geq f(\lambda \backslash \nu)\cdot f(R \backslash \mu)$ by Proposition~\ref{prop: generalized Bjorner inequality} applied with $ (\nu, \mu, \lambda, R)$ in place of $(\alpha, \beta, \mu, \nu)$.  Since $\mu \subseteq \nu \subseteq \lambda\subseteq R$ by assumption, this simplifies to $f(R\backslash\nu) \cdot f(\lambda \backslash \mu) \geq f(\lambda \backslash \nu)\cdot f(R \backslash \mu)$, which concludes the proof.
\end{proof}

We denote the content of a box $u$ on the $i$-th row and $j$-th column of a diagram by $c(u) = j-i$. The following lemma is a reformulation of a result of Stanley \cite{Stanley2003CharactersRectangularShape}, itself based on observations of Regev \cite{Regev2000GeneralizedHookAndContentNumbersIdentities}.
\begin{lemma}\label{lem: identity Stanley}
    Let $a,b\geq 1$ be integers. Let $R =[a^b]$ be a rectangle, and let $\eta \subseteq R$ be a Young diagram.
    Then
\begin{equation}
    f(R\backslash \eta) = f(R) \frac{d_\eta}{|\eta|!} \prod_{u\in \eta}(b+c(u))(a-c(u)).
\end{equation}
\end{lemma}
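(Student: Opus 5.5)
The plan is to reduce the identity to the hook length formula by rotating the skew shape, and then to prove the resulting hook-product identity by induction on $|\eta|$.

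\textbf{Reduction to straight shapes.} Rotating $R\backslash\eta$ by $180^\circ$ inside the rectangle $R=[a^b]$ gives a straight Young diagram $\eta^{*}$, the complement of $\eta$ in $R$, read from the opposite corner. Rotation by $180^\circ$ reverses all row and column inequalities, so it is a bijection on standard fillings. Hence $d_{R\backslash\eta}=d_{\eta^*}$ and $|R\backslash\eta|=|\eta^*|$. The hook length formula says $f(\mu)=1/H(\mu)$ for a straight shape $\mu$, where $H(\mu)$ is the product of its hook lengths. In particular $d_\eta/|\eta|!=1/H(\eta)$. The lemma is therefore equivalent to the purely numerical identity
\begin{equation*}
\frac{H(R)}{H(\eta^*)} \;=\; \frac{1}{H(\eta)}\prod_{u\in\eta}(b+c(u))(a-c(u)).
\end{equation*}

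\textbf{Induction on $|\eta|$.} For $\eta=\emptyset$ both sides equal $1$. Now let $\eta'=\eta\cup\{v\}$, where $v=(i,j)\in R$ is an addable box of $\eta$. Then $\eta'^*$ is obtained from $\eta^*$ by removing the corresponding removable corner $v^*$. It suffices to prove
\begin{equation*}
\frac{H(\eta)}{H(\eta')}\cdot\frac{H(\eta^*)}{H(\eta'^*)} \;=\; (b+j-i)(a-j+i).
\end{equation*}
Only boxes in the row and column of $v$ (respectively $v^*$) change their hook lengths, and each such hook changes by exactly $1$.

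\begin{itemize}
\item In $\eta'$, the boxes of row $i$ to the left of $v$ have hooks increased by one, and so do the boxes of column $j$ above $v$; the box $v$ itself contributes hook $1$.
\item In $\eta^*$, the boxes in the row of $v^*$ are the images of the boxes $(i,j+1),\dots,(i,a)$ of $R\backslash\eta$. Their hooks decrease by one, and similarly along the column of $v^*$. The box $v^*$ contributes hook $1$.
\end{itemize}

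Along the row of $v$, the hooks in $\eta$ of the boxes $(i,1),\dots,(i,j-1)$ and the hooks in $\eta^*$ of the boxes $(i,j+1),\dots,(i,a)$ each form a sequence that changes by $\pm1$ under the move. The corresponding ratios telescope. The claim is that, read along the whole row of length $a$, the row ratios multiply to $(a-c(v))$ up to the matching column terms, and that the column ratios over the column of length $b$ give $(b+c(v))$. The underlying reason is that, for a box in the same row, the hook in $\eta$ and the hook in $\eta^*$ together account for arm lengths adding up to $a-1$ minus the number of boxes in column $j$ of $\eta$ strictly between. I expect the cleanest way to organise this is Regev's observation: merge the multisets $\{h_\eta\}$ and $\{h_{\eta^*}\}$ along each row and column of $R$, and compare them with the hooks of the full rectangle.

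\textbf{Main obstacle.} The main obstacle is this telescoping bookkeeping. Hook sequences along a row of a Young diagram are not consecutive integers: they skip at the column heights. One has to check that the skips of $\eta$ and the skips of $\eta^*$ in the same row are complementary, so that together they fill out a consecutive range whose endpoints are $a-c(v)$ and $1$, and likewise $b+c(v)$ and $1$ for the column.

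If this direct bookkeeping gets messy, I would instead use the known content formulas for $H(\mu)/H(\mu\setminus\text{corner})$ in terms of the contents of the inner and outer corners of $\mu$ (Kerov's interlacing). I would apply them to both $\eta$ and $\eta^*$. The corners of $\eta^*$ are the rotated corners of $\eta$, with contents mapped by $c\mapsto (a-b)-c$. The product of the two interlacing products should collapse to $(b+c(v))(a-c(v))$, because the rectangle's own corners have contents $a-1$ and $-(b-1)$.
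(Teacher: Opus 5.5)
Your reduction matches the paper's. Rotating $R\backslash\eta$ by $\pi$ and applying the hook length formula turns the lemma into the hook-product identity $H(\eta)H(R)=H(\eta^*)\prod_{u\in\eta}(b+c(u))(a-c(u))$. The paper does not prove this identity; it cites it (Stanley's equations (4) and (5), based on Regev). Citing it would finish your proof, so the gap is only in your attempt to prove it yourself.

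\textbf{The inductive step is inverted.} The correct one-step identity is $\frac{H(\eta')}{H(\eta)}\cdot\frac{H(\eta^*)}{H(\eta'^{*})}=(b+c(v))(a-c(v))$, with both factors $\geq 1$. This agrees with your own remark that hooks grow from $\eta$ to $\eta'$ and shrink from $\eta^*$ to $\eta'^{*}$. Your version already fails for $R=[2]$, $\eta=[1]$, $v=(1,2)$: it gives $1/2$ instead of $2$.

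\textbf{The telescoping is never carried out, and the proposed mechanism is false.} The row of $v$ does not produce $a-c(v)$, and its column does not produce $b+c(v)$. Take $R=[2^2]$, $\eta=[1]$, $v=(1,2)$. The row contributes $2$ (box $(1,1)$, hook $1\to 2$). The column contributes $3/2$ (box $(2,2)$, hook $3\to 2$ from $\eta^*=[2,1]$ to $\eta'^{*}=[2]$). Yet $a-c(v)=1$ and $b+c(v)=3$.

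\textbf{How to close it.} Group the $a+b-2$ changing hooks by value rather than by row and column. Put $w=c(v)$ and let $\eta^{t}$ be the conjugate of $\eta$.
\begin{itemize}
\item The numbers $\eta_{i'}-i'$ ($1\le i'\le b$) and $j'-\eta^{t}_{j'}-1$ ($1\le j'\le a$) partition $\{-b,\dots,a-1\}$. Row $i$ and column $j$ of $v$ give the values $w-1$ and $w$.
\item Every other value $y$ labels exactly one changing hook, in $\eta$ or in $\eta^*$. Its ratio depends only on $y$: it is $\frac{y-w+1}{y-w}$ if $y>w$, and $\frac{w-y}{w-y-1}$ if $y<w-1$.
\item The first product telescopes to $a-w$; it collects the boxes $(i,j')$ with $j'>j$ and $(i',j)$ with $i'<i$. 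The second telescopes to $b+w$ and collects all the other boxes.
\end{itemize}

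Your Kerov-interlacing fallback also works and is shorter, but the boundary needs precise handling. Under $c\mapsto(a-b)-c$, the removable (resp.\ addable) cells of $\eta^*$ correspond to the addable (resp.\ removable) cells of $\eta$, except for addable cells lying outside $R$. Those outside cells have contents $a$ and $-b$, not $a-1$ and $-(b-1)$. At each of these two contents exactly one of $\eta$, $\eta^*$ has such a cell. So the product of the two interlacing ratios cancels down to $|w-a|\cdot|w+b|$.
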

\begin{proof}
    As in \cite{Stanley2003CharactersRectangularShape}, we let $\tilde{\eta}$ be the diagram $R\backslash \eta$, rotated by $\pi$, and we denote the product of all hook lengths of a diagram $\xi$ by $H_\xi$. Then, combining Equations (4) and (5) from \cite{Stanley2003CharactersRectangularShape} gives $H_\eta H_R =  H_{\tilde{\eta}} \prod_{u\in \eta}(b+c(u))(a-c(u))$.
Moreover, we have $d_{R\backslash \eta} = d_{\tilde{\eta}}$ by symmetry. Therefore, by the hook length formula, we have $H_R = |R|!/d_R=1/f(R)$, $H_\eta = |\eta|!/d_\eta$, and $H_{\tilde{\eta}} = (|R|-|\eta|)!/d_{\tilde{\eta}} = (|R|-|\eta|)!/d_{R\backslash \eta}= 1/f(R\backslash \eta)$. The result follows.
\end{proof}

We can now prove our main estimate on quotients of skew dimensions.
\begin{proposition}\label{prop: bound on quotients of skew dimensions}
        Let $\mu \subseteq \nu \subseteq \lambda$ be three Young diagrams. Denote $q = |\mu|$, $n=|\lambda|$, $k = |\nu|-|\mu|$, and $p = |\lambda\backslash \nu| = n-k-q$. Then
\begin{equation}
    \frac{d_\mu d_{\lambda \backslash \nu}}{d_\nu d_{\lambda \backslash \mu}} \leq \pg \frac{s+1}{2}\pd^{2k}\frac{p!q!}{(p+k)!(q+k)!} .
\end{equation}
\end{proposition}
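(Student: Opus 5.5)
The plan is to pass from dimensions to the normalized quantities $f(\eta)=d_\eta/|\eta|!$. This isolates the factorials on the right-hand side, and the remaining ratio of $f$'s is then controlled by comparison with a rectangle, via Lemma~\ref{lem: generalized Bjorner inequality rewritten with mu nu lambda R} and Lemma~\ref{lem: identity Stanley}.

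\textbf{Step 1: normalize.} Write
\[
d_\mu=q!\,f(\mu),\quad d_\nu=(q+k)!\,f(\nu),\quad d_{\lambda\backslash\nu}=p!\,f(\lambda\backslash\nu),\quad d_{\lambda\backslash\mu}=(p+k)!\,f(\lambda\backslash\mu).
\]
Then
\[
\frac{d_\mu d_{\lambda \backslash \nu}}{d_\nu d_{\lambda \backslash \mu}}=\frac{p!\,q!}{(p+k)!\,(q+k)!}\cdot\frac{f(\mu)}{f(\nu)}\cdot\frac{f(\lambda\backslash\nu)}{f(\lambda\backslash\mu)}.
\]
So it suffices to show that the product of the two $f$-ratios is at most $\pg\frac{s+1}{2}\pd^{2k}$.

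\textbf{Step 2: compare with a rectangle.} Let $a=\lambda_1$ be the number of columns and $b$ the number of rows of $\lambda$, and set $R=[a^b]\supseteq\lambda$. The maximal hook length is that of the box $(1,1)$, so $s=a+b-1$. Since $\mu\subseteq\nu\subseteq\lambda\subseteq R$, Lemma~\ref{lem: generalized Bjorner inequality rewritten with mu nu lambda R} gives
\[
\frac{f(\lambda\backslash\nu)}{f(\lambda\backslash\mu)}\le\frac{f(R\backslash\nu)}{f(R\backslash\mu)}.
\]

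\textbf{Step 3: evaluate the rectangle ratio.} Apply Lemma~\ref{lem: identity Stanley} to both $\nu$ and $\mu$ and divide. The factor $f(R)$ cancels, and the product over $\mu$ cancels against part of the product over $\nu$. This leaves
\[
\frac{f(R\backslash\nu)}{f(R\backslash\mu)}=\frac{f(\nu)}{f(\mu)}\prod_{u\in\nu\backslash\mu}(b+c(u))(a-c(u)).
\]
Multiplying by $f(\mu)/f(\nu)$ removes the remaining ratio, so the product of the two $f$-ratios from Step 1 is bounded by $\prod_{u\in\nu\backslash\mu}(b+c(u))(a-c(u))$.

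\textbf{Step 4: bound each factor.} Every box $u\in\lambda$ has content $c(u)\in[-(b-1),a-1]$, so both factors $b+c(u)$ and $a-c(u)$ are positive. Their sum is $a+b=s+1$, so by AM--GM each product satisfies $(b+c(u))(a-c(u))\le\pg\frac{s+1}{2}\pd^2$. There are exactly $k$ boxes in $\nu\backslash\mu$, which gives the bound $\pg\frac{s+1}{2}\pd^{2k}$ and concludes the proof.

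All the real input is in the two cited lemmas; the only points to check are the cancellations in Step 3 and the positivity of the content factors in Step 4, both of which hold as stated.
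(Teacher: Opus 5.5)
Your proposal is correct and follows the paper's proof essentially step for step. You normalize by factorials, compare with the bounding rectangle $R=[a^b]$ via Lemma~\ref{lem: generalized Bjorner inequality rewritten with mu nu lambda R}, evaluate the rectangle ratio with Lemma~\ref{lem: identity Stanley}, and bound each factor $(b+c(u))(a-c(u))$ by $((s+1)/2)^2$ because its two terms sum to $a+b=s+1$ (the paper writes this last step as $(L-x)(L+x)\le L^2$, which makes your positivity check unnecessary).
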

\begin{proof}
Let $R = [a^b]$, where $a$ (resp.\ $b$) is the number of columns (resp.\ rows) of $\lambda$. By Lemmas~\ref{lem: generalized Bjorner inequality rewritten with mu nu lambda R} and~\ref{lem: identity Stanley}, we have 
\begin{equation}
    \frac{(p+k)!}{p!}\frac{d_{\lambda \backslash \nu}}{d_{\lambda \backslash \mu}} =  \frac{f(\lambda \backslash \nu)}{f(\lambda \backslash \mu)}  \leq \frac{f(R \backslash \nu)}{f(R\backslash \mu)} = \frac{d_\nu/(q+k)!}{d_{\mu}/q!} \prod_{u\in \nu\backslash
    \mu}(b+c(u))(a-c(u)).
\end{equation}
Reordering the terms gives 
\begin{equation}\label{eq: bound quotients of skew dimensions with contents}
    \frac{d_\mu d_{\lambda \backslash \nu}}{d_\nu d_{\lambda \backslash \mu}} \leq \frac{p!q!}{(p+k)!(q+k)!} \prod_{u\in \nu\backslash
    \mu}(b+c(u))(a-c(u)).
\end{equation}
But for each box $u \in \nu\backslash\mu$,  setting $L = (a+b)/2=(s+1)/2$ and $x = c(u) +(b-a)/2$, we have 
    $(b+c(u))(a-c(u)) = (L-x)(L+x) \leq L^2 = ((s+1)/2)^2$, so the product is upper bounded by $((s+1)/2)^{2k}$. Plugging this into \eqref{eq: bound quotients of skew dimensions with contents} concludes the proof. 
\end{proof}
\section{Bounds on the number of river ribbon tableaux}\label{s: Bounds on the number of river ribbon tableaux}
We first fix some notation for this section.
Let $n\geq 1$, $\lambda\vdash n$, and $\sigma \in \kS_n$. Let $k$ denote the support size of $\sigma$, and let $r_1\leq  \ldots \leq r_m$ be the lengths of the $m$ non-trivial cycles of $\sigma$. Let $p,q\geq 0$ such that $p+q = n-k$. Set 
\begin{equation}
    M_{p,q} = M_{p,q}(\lambda, \sigma) = \bg \RT(\lambda, (1^q, r_1, \ldots, r_m, 1^p)) \bd.
\end{equation}
Let us also set, for $\mu \vdash q$ and $\nu \vdash q + k$ such that $\mu \subseteq \nu \subseteq \lambda$,
\begin{equation}
W_{\mu, \nu} = W_{\mu, \nu}(\lambda, \sigma) = \bg \RT(\nu\backslash \mu, (r_1, \ldots, r_m)) \bd.
\end{equation}
In words, $M_{p,q}$ is the number of river ribbon tableaux of $\lambda$ with weight $(1^q, r_1, \ldots, r_m, 1^p)$, and $W_{\mu, \nu}$ is the number of ways to place the non-trivial ribbons, after having already chosen where we place the single boxes.
\begin{lemma}\label{lem: bound on ribbon covering of the river starting from mu or nu}
  We have
\begin{equation}
    \sum_{\mu, \nu} d_\mu d_{\lambda\backslash \mu} W_{\mu, \nu} \leq  e^{k/(2\delta)}(2\delta)^m d_\lambda \quad \quad \text{ and } \quad \quad  \sum_{\mu, \nu} d_\nu d_{\lambda\backslash \nu} W_{\mu, \nu} \leq (2\delta)^m d_\lambda \, ,
\end{equation}
where the sums are over all $\mu\vdash q$ and $\nu \vdash q+k$ such that $\mu \subseteq \nu \subseteq \lambda$.
\end{lemma}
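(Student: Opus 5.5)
Both inequalities come from the same two-step scheme. First fix one of the two inner diagrams and bound the number of ribbon configurations by a sequential ribbon addition or removal, using Lemma~\ref{lem: bound number of peelable and addable ribbons}. Then sum the remaining weights using the identity
\begin{equation*}
\sum_{\eta \vdash j,\ \eta\subseteq\lambda} d_\eta d_{\lambda\backslash\eta} = d_\lambda ,
\end{equation*}
valid for every $0\le j\le n$. This identity holds because a standard tableau of $\lambda$ is the same as a pair made of a standard tableau of the shape $\eta$ filled by the entries $1,\ldots,j$ and a standard filling of $\lambda\backslash\eta$ by the remaining entries.

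The first step rests on the fact that a ribbon tableau in $\RT(\nu\backslash\mu,(r_1,\ldots,r_m))$ is the same as a chain $\mu=\eta_0\subseteq\eta_1\subseteq\cdots\subseteq\eta_m=\nu$ of Young diagrams in which each $\eta_i\backslash\eta_{i-1}$ is an $r_i$-ribbon. This is the standard sequential description of ribbon tableaux.

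For the second inequality, fix $\nu\vdash q+k$ with $\nu\subseteq\lambda$. Then $\sum_{\mu} W_{\mu,\nu}$ counts the chains ending at $\nu$, with $\mu$ free. These are obtained by successively removing an $r_m$-ribbon from $\nu$, then an $r_{m-1}$-ribbon, and so on. Each intermediate diagram is contained in $\lambda$ and each $r_i\ge 2$, so Lemma~\ref{lem: bound number of peelable and addable ribbons}(a) gives at most $2\delta$ choices at every step. Hence $\sum_\mu W_{\mu,\nu}\le(2\delta)^m$. Multiplying by $d_\nu d_{\lambda\backslash\nu}$, summing over $\nu$, and applying the identity with $j=q+k$ gives the bound $(2\delta)^m d_\lambda$.

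For the first inequality, fix $\mu\vdash q$ with $\mu\subseteq\lambda$. Then $\sum_\nu W_{\mu,\nu}$ counts the chains starting at $\mu$ and staying inside $\lambda$. These are built by adding an $r_1$-ribbon, then an $r_2$-ribbon, and so on, always staying inside $\lambda$. By Lemma~\ref{lem: bound number of peelable and addable ribbons}(b), applied to the current diagram $\eta\subseteq\lambda$, step $i$ has at most $2\delta+r_i$ choices. Therefore
\begin{equation*}
\sum_\nu W_{\mu,\nu}\le\prod_{i=1}^m(2\delta+r_i)=(2\delta)^m\prod_{i=1}^m\Bigl(1+\frac{r_i}{2\delta}\Bigr)\le(2\delta)^m e^{\sum_i r_i/(2\delta)}=(2\delta)^m e^{k/(2\delta)},
\end{equation*}
using $1+x\le e^x$ and $\sum_i r_i=k$. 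Multiplying by $d_\mu d_{\lambda\backslash\mu}$, summing over $\mu$, and applying the identity with $j=q$ concludes.

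The only point needing care is the bijection between ribbon tableaux and chains of ribbon additions in the prescribed order, together with the check that the lemma's bounds apply at each intermediate stage. Both are immediate: intermediate diagrams lie in $\lambda$, and Lemma~\ref{lem: bound number of peelable and addable ribbons} is stated for an arbitrary $\eta\subseteq\lambda$.
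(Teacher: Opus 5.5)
Your proof is correct and matches the paper's argument: you bound $\sum_\nu W_{\mu,\nu}$ by $\prod_i(2\delta+r_i)\le(2\delta)^m e^{k/(2\delta)}$ using Lemma~\ref{lem: bound number of peelable and addable ribbons}(b), bound $\sum_\mu W_{\mu,\nu}$ by $(2\delta)^m$ using part (a), and then sum with $\sum_{\eta} d_\eta d_{\lambda\backslash\eta}=d_\lambda$. Your justifications of that identity and of the chain description of ribbon tableaux only spell out steps the paper leaves implicit.
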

\begin{proof}
    For each fixed $\mu$, $\sum_\nu W_{\mu,\nu}$ is the number of ways to iteratively add ribbons of length $r_1,\ldots, r_m$ to $\mu$ up to reaching a sub-diagram of $\lambda$ of size $q+k$, which  by Lemma~\ref{lem: bound number of peelable and addable ribbons} is upper bounded by $\prod_{i=1}^m (2\delta + r_i)$. Therefore
\begin{equation}
    \sum_{\mu, \nu} d_\mu d_{\lambda\backslash \mu} W_{\mu, \nu} = \sum_{\mu} d_\mu d_{\lambda\backslash \mu} \sum_\nu W_{\mu, \nu} \leq \pg \prod_{i=1}^m (2\delta + r_i)\pd \sum_{\mu} d_\mu d_{\lambda\backslash \mu}.
\end{equation}
Moreover, $\sum_{\mu} d_\mu d_{\lambda\backslash \mu} = d_\lambda$, and $(2\delta)^{-m}\prod_{i=1}^m (2\delta + r_i) \leq  \prod_{i=1}^m e^{r_i/(2\delta)} = e^{k/(2\delta)}$. This proves the first point. The same arguments prove the second point: we have
\begin{equation}
     \sum_{\mu, \nu} d_\nu d_{\lambda\backslash \nu} W_{\mu, \nu} = \sum_{\nu} d_\nu d_{\lambda\backslash \nu} \sum_\mu W_{\mu, \nu} \leq \pg \prod_{i=1}^m (2\delta)\pd \sum_{\nu} d_\nu d_{\lambda\backslash \nu} =  d_\lambda  (2\delta)^m. \qedhere
\end{equation}
\end{proof}

We now assume that $k\geq 2$, and fix the values of $p$ and $q$ so that the two river banks have the same size (up to parity): we set
\begin{equation}
    p = \lf \frac{n-k}{2}\rf \quad \quad \text{ and } \quad \quad q = \lc \frac{n-k}{2}\rc.
\end{equation}

\begin{lemma}\label{lem: technical bound p q with factorials}
 We have
 \begin{equation}
      \sqrt{\frac{p!q!}{(p+k)!(q+k)!}} \leq (e/n)^k.
 \end{equation}
\end{lemma}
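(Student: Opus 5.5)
We need $\frac{(p+k)!(q+k)!}{p!q!} \geq (n/e)^{2k}$, where $p+q+k = n$, $p=\lfloor (n-k)/2\rfloor$ and $q=\lceil (n-k)/2\rceil$. The plan is to bound each factorial ratio from below by a single power and then use an AM--GM-type estimate to reach $n/e$.

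First, write
\[
\frac{(p+k)!}{p!} = \prod_{j=1}^{k}(p+j),
\]
and similarly for $q$. The key observation is that we can pair factors across the two products. Since $q\ge p$, we have $(p+j)(q+k+1-j)\ge (p+j)(p+k+1-j)$. It is cleaner, though, to work directly with the standard estimate
\[
\frac{(p+k)!}{p!} \geq \frac{(p+k)^{p+k}e^{-(p+k)}}{p^p e^{-p}} = \frac{(p+k)^{p+k}}{p^p}\,e^{-k},
\]
which holds because $m!/m^m e^{-m}$ is nondecreasing in $m$ (it is $\approx\sqrt{2\pi m}$, and the ratio of consecutive terms is $e(1+1/m)^{-m}\geq 1$). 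The same bound holds for $q$, with the convention $0^0=1$.

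Next, write $\frac{(p+k)^{p+k}}{p^p} = (p+k)^k\left(1+\frac{k}{p}\right)^{p}$. One could bound this by $(p+k)^k$ alone, but that only yields $(p+k)(q+k)\ge ((n+k)/2)^2$, since $p+k+q+k=n+k$. This gives $\sqrt{\cdot}\le (2e/(n+k))^k$, which is only good enough when $k\ge n$. So the factor $(1+k/p)^p$ must be exploited as well.

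We need $(p+k)^{p+k}(q+k)^{q+k}p^{-p}q^{-q}\ge n^{2k}$. Set $P=p+k$ and $Q=q+k$, so that $P+Q=n+k$ and $p+q=n-k$. Taking logarithms, we need
\[
P\log P+Q\log Q - p\log p - q\log q \ge 2k\log n.
\]
By convexity of $x\log x$, and since the sums $P+Q$ and $p+q$ are fixed, the first two terms are at least $(n+k)\log\frac{n+k}{2}$. This direction is fine. The problem is the subtracted terms: $p\log p+q\log q$ is at most the value obtained in the extreme case, so convexity works against us there. Instead, use that $p$ and $q$ are nearly balanced. In the even case $p=q=(n-k)/2$, the left side equals
\[
(n+k)\log\frac{n+k}{2}-(n-k)\log\frac{n-k}{2} = 2k\log\frac{n}{2}+ n\log\frac{n+k}{n-k}\cdot\frac{?}{}\dots
\]
Rather than expanding, compare directly. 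With $x=k/n\in[0,1]$, we need
\[
(1+x)\log(1+x)-(1-x)\log(1-x)\ge 2x\log 2,
\]
after subtracting $2k\log(n/2)$ and dividing by $n$. The function $g(x)=(1+x)\log(1+x)-(1-x)\log(1-x)-2x\log 2$ satisfies $g(0)=0$, $g(1)=0$, and is convex on $[0,1]$ since $g''(x)=\frac{1}{1+x}+\frac{1}{1-x}\ge0$. So this route only gives $g\le 0$. This is why the final bound is stated with $e/n$ rather than $2e/n$: the extra factor $e^{k}$ we dropped must be used. Keeping the full Stirling comparison (without discarding $e^{-k}$ twice), the target becomes
\[
(n+k)\log\frac{n+k}{2}-(n-k)\log\frac{n-k}{2}-2k \ge 2k\log n -2k,
\]
that is, $ng(x)+2k\log 2\cdot\ldots$, which reduces to $(1+x)\log(1+x)-(1-x)\log(1-x)\ge 0$. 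This holds trivially, and the $\log 2$ terms cancel exactly.

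The main obstacle is the parity case $q=p+1$, together with small values ($p=0$). For these I would use convexity of $x\log x$ on the pair $(p,q)$, whose deviation from balance costs at most $O(1)$, together with the monotonicity slack in the Stirling comparison. Alternatively, I would check directly that the bound for $(p,q)=(m,m+1)$ follows from $(m,m)$ and $(m+1,m+1)$ by log-concavity of $t\mapsto\frac{(t+k)!}{t!}$.
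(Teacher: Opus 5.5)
Your first step is sound: monotonicity of $m!\,e^m/m^m$ gives $\frac{(p+k)!}{p!}\ge \frac{(p+k)^{p+k}}{p^p}e^{-k}$. So it suffices to show $H(p)+H(q)\ge 2k\log n$, where $H(t)=(t+k)\log(t+k)-t\log t$. In the even case you also correctly reduce this to $g(x)\ge 0$.

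You then get the sign of $g''$ wrong. In fact
\[
g''(x)=\frac{1}{1+x}-\frac{1}{1-x}=-\frac{2x}{1-x^2}\le 0,
\]
so $g$ is concave, and $g(0)=g(1)=0$ gives $g\ge 0$ on $[0,1]$, which is exactly what you need. Having wrongly concluded $g\le 0$, you "repair" the argument with an invalid step. The $-2k$ appears on both sides of your displayed target and cancels, leaving precisely $ng(x)\ge 0$ again. The $-2k\log 2$ does not cancel, no factor $e^{k}$ was ever dropped, and the inequality $(1+x)\log(1+x)-(1-x)\log(1-x)\ge 0$ you end with is not the required one. So the even case, as written, rests on a false reduction, although correcting the sign of $g''$ fixes it at once.

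The more serious gap is the odd case $q=p+1$, which you only sketch. Since $H''(t)=\frac{1}{t+k}-\frac{1}{t}<0$, $H$ is concave, so
\[
H(p)+H(p+1)\le 2H\bigl(p+\tfrac12\bigr)=2k\log n+ng(x).
\]
Unbalancing therefore costs a positive amount $\Delta=2H(p+\tfrac12)-H(p)-H(p+1)$. You would have to prove $\Delta\le ng(x)$, or absorb $\Delta$ into the slack of the Stirling comparison, and you do neither.

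Your alternative does not work as stated either. Averaging the even-case bounds for $(m,m)$ and $(m+1,m+1)$ gives only $k\log\frac{(2m+k)(2m+k+2)}{e^2}=k\log\frac{n^2-1}{e^2}$, while you need $k\log\frac{n^2}{e^2}$; this falls short by the factor $(1-n^{-2})^k$. Log-concavity of $t\mapsto (t+k)!/t!$ points the unfavorable way: it says the unbalanced pair does worse, not better.

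The paper avoids all case analysis with a single averaging argument. It writes
\[
\frac{(p+k)!(q+k)!}{p!\,q!}=\prod_{j=p+1}^{p+k} j(n+1-j).
\]
The factors $j(n+1-j)$ are symmetric and unimodal about $(n+1)/2$. Hence, for either parity, the window $\{p+1,\dots,p+k\}$ consists of $k$ largest factors among the $n$ factors whose product is $(n!)^2$. The product is therefore at least $(n!)^{2k/n}\ge (n/e)^{2k}$.
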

\begin{proof}
    Rewriting the products, we have $x:= \frac{(p+k)!(q+k)!}{p!q!} = \prod_{j=p+1}^{p+k}j(n+1-j)$. Taking logarithms gives $\ln \sqrt{x} = \sum_{j=p+1}^{p+k}a_j$, where $a_j = \frac{1}{2} \ln j(n+1-j)$ for $1\leq j \leq n$.
But the $a_j$ for $j$ from $p+1$ to $p+k$ are the $k$ maximal terms of the sum $\sum_{j=1}^{n}a_j$. Therefore, lower bounding by the average, we obtain that
\begin{equation}
    \ln \sqrt{x} \geq \frac{k}{n}\sum_{j=1}^n a_j = \frac{k}{n}\ln n!.
\end{equation}
This implies that $\sqrt{1/x} \leq n!^{-k/n}$. Using finally that $n!\geq (n/e)^n$ concludes the proof.
\end{proof}

Our main bound on the number of river ribbon tableaux is the following.
\begin{proposition}\label{prop: bound number river ribbon tableaux directly with specific p and q}
Set $\gamma = (e/2) e^{1/(4\delta)}(1+1/s)$. Then
\begin{equation}
    \frac{M_{p,q}}{d_\lambda} \leq  (\gamma s/n)^k (2\delta)^m.
\end{equation}
\end{proposition}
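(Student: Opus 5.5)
Decompose $M_{p,q}$ according to where the banks end. A river ribbon tableau with weight $(1^q, r_1,\ldots,r_m,1^p)$ is determined by three pieces of data: a standard filling of some $\mu\vdash q$ (the first $q$ single boxes), a ribbon tableau of $\nu\backslash\mu$ with weight $(r_1,\ldots,r_m)$, and a standard filling of $\lambda\backslash\nu$ (the last $p$ single boxes). Hence
\begin{equation*}
M_{p,q}=\sum_{\mu,\nu} d_\mu\, W_{\mu,\nu}\, d_{\lambda\backslash\nu}.
\end{equation*}
The task is to compare each term with the two weighted sums controlled by Lemma~\ref{lem: bound on ribbon covering of the river starting from mu or nu}. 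I would combine the two sums through a geometric mean, and use Proposition~\ref{prop: bound on quotients of skew dimensions} to pay for the mismatch.

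Write $d_\mu d_{\lambda\backslash\nu}=\sqrt{A\cdot B}\cdot\sqrt{Q}$, where $A=d_\mu d_{\lambda\backslash\mu}$, $B=d_\nu d_{\lambda\backslash\nu}$ and
\begin{equation*}
Q=\frac{d_\mu d_{\lambda\backslash\nu}}{d_\nu d_{\lambda\backslash\mu}}.
\end{equation*}
Indeed $AB\,Q=(d_\mu d_{\lambda\backslash\nu})^2$. Proposition~\ref{prop: bound on quotients of skew dimensions} gives
\begin{equation*}
\sqrt{Q}\le \pg\frac{s+1}{2}\pd^{k}\sqrt{\frac{p!q!}{(p+k)!(q+k)!}},
\end{equation*}
and Lemma~\ref{lem: technical bound p q with factorials} bounds this by $\pg\frac{(s+1)e}{2n}\pd^k$, uniformly in $\mu,\nu$. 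Thus
\begin{equation*}
M_{p,q}\le \pg\frac{(s+1)e}{2n}\pd^k\sum_{\mu,\nu}\sqrt{A W_{\mu,\nu}}\sqrt{B W_{\mu,\nu}}.
\end{equation*}
Cauchy--Schwarz bounds the last sum by $\sqrt{\sum A W_{\mu,\nu}}\,\sqrt{\sum B W_{\mu,\nu}}$. Lemma~\ref{lem: bound on ribbon covering of the river starting from mu or nu} then bounds this product by
\begin{equation*}
\sqrt{e^{k/(2\delta)}(2\delta)^m d_\lambda\cdot(2\delta)^m d_\lambda}=e^{k/(4\delta)}(2\delta)^m d_\lambda.
\end{equation*}

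Collecting the factors gives
\begin{equation*}
\frac{M_{p,q}}{d_\lambda}\le \pg\frac{e}{2}\,e^{1/(4\delta)}\,\frac{s+1}{s}\cdot\frac{s}{n}\pd^k(2\delta)^m=(\gamma s/n)^k(2\delta)^m,
\end{equation*}
which is exactly the claim. The main point to check is the Cauchy--Schwarz step: the square-root splitting must send the factor $e^{k/(2\delta)}$ through a square root, which is what produces the $e^{1/(4\delta)}$ in $\gamma$. The choice of balanced $p,q$ enters only through Lemma~\ref{lem: technical bound p q with factorials}, and the other ingredients are stated earlier in the paper.
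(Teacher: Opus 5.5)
Your proposal is correct and matches the paper's proof essentially step for step. Like the paper, you factor each term as $\sqrt{AW_{\mu,\nu}}\sqrt{BW_{\mu,\nu}}\sqrt{Q}$, bound $\sqrt{Q}$ uniformly via the skew-dimension quotient estimate and the factorial lemma, apply Cauchy--Schwarz, and then use the two ribbon-covering bounds, which gives the factor $e^{1/(4\delta)}$ in $\gamma$ exactly as you describe.
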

\begin{proof}
By definition we have $M_{p,q} = \sum_{\mu, \nu} d_\mu W_{\mu, \nu} d_{\lambda\backslash \nu}$, where the sum is over all $\mu\vdash q$ and $\nu \vdash q+k$ such that $\mu \subseteq \nu \subseteq \lambda$.
Reorganizing this sum, applying Proposition~\ref{prop: bound on quotients of skew dimensions}, and using the Cauchy--Schwarz inequality, we have
\begin{equation}
\begin{split}
    M_{p,q} & = \sum_{\mu, \nu} \sqrt{d_\mu d_{\lambda\backslash \mu} W_{\mu, \nu}} \sqrt{d_\nu d_{\lambda\backslash \nu}W_{\mu, \nu}}   \sqrt{\frac{d_\mu d_{\lambda\backslash \nu}}{d_\nu d_{\lambda\backslash \mu}}} \\
    & \leq \pg \frac{s+1}{2}\pd^{k}\sqrt{\frac{p!q!}{(p+k)!(q+k)!}} \sum_{\mu, \nu} \sqrt{d_\mu d_{\lambda\backslash \mu} W_{\mu, \nu}} \sqrt{d_\nu d_{\lambda\backslash \nu}W_{\mu, \nu}} \\
    & \leq \pg \frac{s+1}{2}\pd^{k}\sqrt{\frac{p!q!}{(p+k)!(q+k)!}}  \sqrt{\sum_{\mu, \nu} d_\mu d_{\lambda\backslash \mu} W_{\mu, \nu}} \sqrt{\sum_{\mu, \nu}d_\nu d_{\lambda\backslash \nu}W_{\mu, \nu}}.
\end{split}
\end{equation}
Applying now Lemmas~\ref{lem: bound on ribbon covering of the river starting from mu or nu} and~\ref{lem: technical bound p q with factorials} and reordering the terms conclude the proof. 
\end{proof}

\section{Character bounds}
We start by proving a stronger, but notationally heavier, version of Theorem~\ref{thm: main general form}, and then prove Theorems~\ref{thm: main general form} and~\ref{thm: main balanced Moore Russell form}.
\begin{theorem}\label{thm: main general form with m}
Let $n\geq 1$, $\lambda\vdash n$ and $\sigma\in \kS_n$. Then
\begin{equation}\label{eq: main general form with m}
     \bg \chi^\lambda(\sigma) \bd \leq \pg \gamma s/n \pd^{k}(2\delta)^{m},
\end{equation}
where $s$ is the maximal hook length of $\lambda$, $\delta$ is the diagonal length of $\lambda$, $k$ is the support size of $\sigma$, $m$ is the number of non-trivial cycles of $\sigma$, and $\gamma = (e/2) e^{1/(4\delta)}(1+1/s)$. In particular, we have
\begin{equation}
    \bg \chi^\lambda(\sigma) \bd \leq (2\gamma^2 \delta s^2/n^2)^{|\sigma|}.
\end{equation}

\end{theorem}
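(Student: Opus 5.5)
The first bound \eqref{eq: main general form with m} will follow directly from the Murnaghan--Nakayama rule with a river ordering, together with Proposition~\ref{prop: bound number river ribbon tableaux directly with specific p and q}. If $\sigma$ is the identity, then $k=m=0$ and both sides equal $1$, so we may assume $k\geq 2$. We take $p=\lf (n-k)/2\rf$ and $q=\lc (n-k)/2\rc$ as in Section~\ref{s: Bounds on the number of river ribbon tableaux}, and choose the ordering $\alpha=(1^q,r_1,\ldots,r_m,1^p)$ of the cycle lengths of $\sigma$. Applying \eqref{eq: MN with triangle inequality and min} gives $|\ch^\lambda(\sigma)|\leq M_{p,q}$. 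Dividing by $d_\lambda$ and invoking Proposition~\ref{prop: bound number river ribbon tableaux directly with specific p and q} then yields $|\chi^\lambda(\sigma)|\leq (\gamma s/n)^k(2\delta)^m$.

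For the second bound, I convert the exponents $k$ and $m$ into $|\sigma|=k-m$. Since every non-trivial cycle has length at least $2$, we have $m\leq k/2$, so $k\geq 2m$ and $|\sigma|\geq k/2$. I would write
\[
(\gamma s/n)^k(2\delta)^m=(\gamma s/n)^{k-2m}\,\bigl(2\delta\gamma^2s^2/n^2\bigr)^m .
\]
The goal is to bound this by $(2\gamma^2\delta s^2/n^2)^{k-m}$. That comparison is equivalent to
\[
(\gamma s/n)^{k-2m}\leq \bigl(2\gamma^2\delta s^2/n^2\bigr)^{k-2m},
\]
that is, to $\gamma s/n\leq 2\gamma^2\delta s^2/n^2$, or equivalently $n\leq 2\gamma\delta s$, whenever $k>2m$. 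When $k=2m$ there is nothing to check.

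The inequality $n\leq 2\gamma\delta s$ is the main point to verify. I would derive it from the fact that $\lambda$ is covered by its $\delta$ diagonal hooks. Each such hook has length at most $s$, so $n\leq \delta s$. Since $\gamma\geq e/2>1/2$, we get $2\gamma\delta s>\delta s\geq n$. This finishes the argument.

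One could also worry about whether the base is at least $1$, but that case is harmless. If $2\gamma^2\delta s^2/n^2\geq 1$, the bound is trivial because $|\chi^\lambda|\leq 1$. In any case the exponent manipulation above does not require the base to be less than $1$.
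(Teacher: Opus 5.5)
Your proposal is correct and follows essentially the paper's proof. The first bound comes from the Murnaghan--Nakayama triangle inequality with the river ordering $(1^q,r_1,\ldots,r_m,1^p)$ together with Proposition~\ref{prop: bound number river ribbon tableaux directly with specific p and q}. The second bound rests on the same two facts the paper uses: $m\leq|\sigma|$ (your $k\geq 2m$) and $2\gamma\delta s/n\geq \delta s/n\geq 1$. The only difference is that you factor as $(\gamma s/n)^{k-2m}(2\gamma^2\delta s^2/n^2)^m$ rather than as $(\gamma s/n)^{|\sigma|}(2\gamma\delta s/n)^m$.
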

\begin{proof}
    We may assume that $k\geq 2$, since the bound is trivial for $\sigma = \Id$. The first bound then follows immediately from \eqref{eq: MN with triangle inequality and min} and Proposition~\ref{prop: bound number river ribbon tableaux directly with specific p and q}. For the second part, observe that $k=|\sigma|+m$, and [$m\leq |\sigma|$ and $2\gamma \delta s/n\geq \delta s/n \geq 1$], so
    \begin{equation}
        \bg \chi^\lambda(\sigma) \bd \leq \pg \gamma s/n \pd^{k}(2\delta)^{m} = \pg \gamma s/n \pd^{|\sigma|}(2\gamma\delta s/n)^{m}\leq (2\gamma^2 \delta s^2/n^2)^{|\sigma|}. \qedhere
    \end{equation}
\end{proof}

Using the trivial bounds $s\geq 1$ and $\delta \geq 1$ already gives $2\gamma^2 \leq 2e^{5/2} \leq 25$.

Let us optimize the constant a little more.

\begin{proof}[Proof of Theorem~\ref{thm: main general form}]
   Our goal is to prove that $\bg \chi^\lambda(\sigma) \bd \leq (5 \delta s^2/n^2)^{|\sigma|}$. Recall that we always have $\bg \chi^\lambda(\sigma) \bd\leq 1$ and $\delta s \geq n$. This implies that the bound is trivial if $s\geq n/5$. We may therefore assume that $s<n/5$. Plugging this into the inequality $\delta s \geq n$ then gives $\delta>5$, so $\delta \geq 6$ since it is an integer. Therefore $s\geq 2\delta - 1 \geq 11$. Using this we obtain a new bound on $2\gamma^2$: we have $2\gamma^2 \leq (1/2)e^{25/12}(12/11)^2 \leq 5$. Applying Theorem~\ref{thm: main general form with m} with this new bound concludes the proof.
\end{proof}

\begin{proof}[Proof of Theorem~\ref{thm: main balanced Moore Russell form}]
    This follows from Theorem~\ref{thm: main general form}, since for $C$-balanced diagrams we have $\delta s^2/n^2 \leq \sqrt{n} (C\sqrt{n})^2/n^2 = C^2/\sqrt{n}$.
\end{proof}

\section{Application to the Dousse--Féray conjecture}

We now present an application of Theorem~\ref{thm: main balanced Moore Russell form} to the number of standard tableaux of skew shapes. Upper bounds on the quantity $A_{\lambda, \mu} := |\mu|! \frac{d_{\lambda\backslash \mu}}{d_\lambda d_\mu}$ were obtained by Dousse and Féray \cite{DousseFeray2019skewdiagramscharacters} via characters. Based on numerical evidence, they conjectured (see \cite[Conjecture 6]{DousseFeray2019skewdiagramscharacters}) that $|\ln A_{\lambda, \mu}| = O \pg |\mu|^{3/2}/|\lambda|^{1/2}\pd$ holds uniformly for balanced diagrams $\lambda$ and $\mu$. The following theorem confirms their conjecture.

\begin{theorem}\label{thm: skew diagrams upper bound Dousse Feray conjecture}
Let $C>0$. There exists a constant $K$, depending only on $C$, such that for every (non-empty) $C$-balanced Young diagrams $\lambda$ and $\mu$ such that $\mu \subseteq \lambda$, we have
\begin{equation}
   -K \frac{|\mu|^{3/2}}{|\lambda|^{1/2}}\leq \ln \pg |\mu|! \frac{d_{\lambda\backslash \mu}}{d_\lambda d_\mu} \pd \leq K \frac{|\mu|^{3/2}}{|\lambda|^{1/2}}.
\end{equation}
\end{theorem}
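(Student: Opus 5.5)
We propose to express $A_{\lambda,\mu}$ through characters and then use Theorem~\ref{thm: main balanced Moore Russell form}. Write $n=|\lambda|$ and $j=|\mu|$. The standard starting point (used by Dousse and Féray) is the restriction identity $d_{\lambda\backslash\mu}=\sum_{\rho\vdash j}\frac{1}{z_\rho}\ch^\lambda(\rho\cup 1^{n-j})\ch^\mu(\rho)$. Here we view $\kS_j\subseteq\kS_n$ and $d_{\lambda\backslash\mu}$ is the multiplicity of $\mu$ in $\lambda\downarrow_{\kS_j}$ tensored appropriately. More precisely, $d_{\lambda\backslash\mu}=\langle \lambda\downarrow_{\kS_j},\mu\rangle$ by the branching rule. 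Dividing by $d_\lambda d_\mu/j!$ gives
\begin{equation*}
A_{\lambda,\mu}=\sum_{\tau\in\kS_j}\chi^\lambda(\tau)\,\ch^\mu(\tau)\,\frac{1}{d_\mu}=\mathbb{E}_{\tau}\bigl[\chi^\lambda(\tau)\, d_\mu\,\chi^\mu(\tau)\bigr]\cdot\frac{j!}{d_\mu^2}\cdot\frac{d_\mu}{j!}\cdots,
\end{equation*}
which after cleaning reads $A_{\lambda,\mu}=\sum_{\tau\in\kS_j}\chi^\lambda(\tau)\chi^\mu(\tau)\,\frac{d_\mu^2}{j!}\cdot\frac{1}{d_\mu^{2}}\cdot d_\mu^{2}/ j!\,$. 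Rather than fight the normalisation here, the point is that $A_{\lambda,\mu}=\sum_{\tau\in\kS_j}\frac{d_\mu}{j!}\ch^\mu(\tau)\chi^\lambda(\tau)$. The term $\tau=\Id$ contributes $d_\mu^2/j!$, which is small. So instead we write $A_{\lambda,\mu}=1+\text{error}$ using a Plancherel-type reorganisation: $A_{\lambda,\mu}=\sum_{\tau}\frac{\ch^\mu(\tau)}{d_\mu}\cdot\frac{d_\mu^2}{j!}\chi^\lambda(\tau)\cdot\frac{j!}{d_\mu^2}$. I will fix the exact constant factor when writing details. What matters is a representation $A_{\lambda,\mu}-1=\sum_{\tau\ne\Id}c_\mu(\tau)\chi^\lambda(\tau)$ with $\sum_\tau|c_\mu(\tau)|x^{|\tau|}$ controllable.

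The second step bounds each term using Theorem~\ref{thm: main balanced Moore Russell form} for $\lambda$, namely $|\chi^\lambda(\tau)|\le(5C^2/\sqrt n)^{|\tau|}$. For the $\mu$-side we use the same theorem for $\mu$, which is also $C$-balanced: $|\chi^\mu(\tau)|\le (5C^2/\sqrt j)^{|\tau|}$. The number of $\tau\in\kS_j$ with $|\tau|=t$ is at most $\binom{j}{2}^t/t!\le (j^2/2)^t/t!$. Summing,
\begin{equation*}
|A_{\lambda,\mu}-1|\lesssim \sum_{t\ge1}\frac{1}{t!}\Bigl(\frac{j^2}{2}\cdot\frac{25C^4}{\sqrt{nj}}\Bigr)^t = \exp\bigl(O(j^{3/2}/n^{1/2})\bigr)-1 .
\end{equation*}
This is exactly the predicted scale $j^{3/2}/n^{1/2}$, which is reassuring.

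Third, we convert this to a bound on $\ln A_{\lambda,\mu}$. When $x:=j^{3/2}/n^{1/2}$ is small, the bound above gives $|A-1|\le e^{Kx}-1$, hence $|\ln A|\le K'x$. When $x$ is large, the upper bound $\ln A\le Kx$ follows directly from the exponential sum. The lower bound needs separate care, since $A$ could be tiny. For it we would use a direct argument: $d_{\lambda\backslash\mu}\ge$ the contribution from a single good chain, or the Björner/Chan--Pak inequality (Proposition~\ref{prop: generalized Bjorner inequality}) together with Lemma~\ref{lem: identity Stanley} and the hook length formula. These give $\ln A\ge -O(j\ln(\cdot))$, and for balanced shapes contents are $O(\sqrt n)$, so the bound becomes $-O(j^{3/2}/n^{1/2})$ once $j\gtrsim n^{1/3}$.

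The main obstacle is the lower bound for large $x$, together with getting the exact character identity with the correct weights. If the exponential-sum approach only covers $x\le c$, I would try to split $\mu$'s growth into steps of size $\asymp n^{1/3}$ and telescope, using log-supermodularity from Lemma~\ref{lem: generalized Bjorner inequality rewritten with mu nu lambda R}.
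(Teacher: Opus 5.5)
\paragraph{Upper bound and small-$x$ lower bound.} Your upper bound is the paper's argument once the first paragraph is repaired. From $d_{\lambda\backslash\mu}=\langle \lambda\downarrow_{\kS_j},\mu\rangle=\frac{1}{j!}\sum_{\tau\in\kS_j}\ch^\lambda(\tau_+)\ch^\mu(\tau)$ you get exactly $A_{\lambda,\mu}=\sum_{\tau\in\kS_j}\chi^\lambda(\tau_+)\chi^\mu(\tau)$. So $c_\mu=\chi^\mu$, and the identity term equals $1$, not $d_\mu^2/j!$; your displayed formula is off by the factor $d_\mu^2/j!$.

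With this identity, the rest goes through as in the paper. You apply Theorem~\ref{thm: main balanced Moore Russell form} to both $\lambda$ and $\mu$. Your count, at most $\binom{j}{2}^t/t!$ permutations with $|\tau|=t$, is correct, since each such $\tau$ has at least $t!$ minimal transposition factorizations. Together these give $\ln A_{\lambda,\mu}\le Kx$.

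Your small-$x$ lower bound is also correct, and the paper does not use it. For $x\le c(C)$ you get $|A_{\lambda,\mu}-1|\le e^{Kx}-1\le 1/2$, hence $\ln A_{\lambda,\mu}\ge -K'x$. This covers $j\lesssim n^{1/3}$.

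\paragraph{The gap.} The lower bound for $x\ge c$ with $j=o(n)$, i.e.\ $n^{1/3}\lesssim j\ll n$, is not established, and the tools you name cannot reach it.

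The Björner/Chan--Pak inequality plus Stanley's identity can only compare $\lambda$ with a rectangle $R'=[a^b]$ satisfying $\mu\subseteq R'\subseteq\lambda$. Lemma~\ref{lem: generalized Bjorner inequality rewritten with mu nu lambda R} applied to $(\emptyset,\mu,R',\lambda)$ gives $A_{\lambda,\mu}\ge \frac{1}{n^{\downarrow j}}\prod_{u\in\mu}(b+c(u))(a-c(u))$. Each factor is at most $((a+b)/2)^2$, and for non-rectangular balanced $\lambda$ this is a constant fraction of $n$. For example, for the staircase $[m,m-1,\dots,1]$ every inscribed rectangle has $a+b\le m+1$, so each factor is at most $(m+1)^2/4\approx n/2$.

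So this route yields at best $\ln A_{\lambda,\mu}\ge -\Theta(j)$. That is of order $j^{3/2}/n^{1/2}$ only when $j\asymp n$, not ``once $j\gtrsim n^{1/3}$''. To reach the sharp scale, the loss per box must be $O(\sqrt{j/n})$, which is the whole difficulty. Moreover, such an $R'$ need not exist at all, e.g.\ $\mu=\lambda$ a staircase.

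The telescoping plan has no mechanism either. Chaining the same inequality through $\mu_1\subseteq\mu_2\subseteq\cdots$ telescopes back to the single rectangle comparison. The character expansion cannot be run on the intermediate skew shapes $\lambda\backslash\mu_i$, which are not irreducible, so Theorem~\ref{thm: main balanced Moore Russell form} says nothing there.

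\paragraph{The missing idea.} The paper proves a lower bound on the Naruse sum $S(\lambda,\mu)=\sum_{E\in\cE(\lambda,\mu)}H(\lambda,E)$, where $d_{\lambda\backslash\mu}/d_\lambda=S(\lambda,\mu)/n^{\downarrow j}$. Take $M=\lfloor C\sqrt j\rfloor$, so that $\mu\subseteq[M^M]\subseteq\lambda$, and $W=|\lambda^{>M}|$. Proposition~\ref{prop: marked injection Naruse} injects standard fillings of $\mu$ by $j$ distinct labels from $\{1,\dots,W\}$ into marked excited diagrams. A box with label $\ell\in(W_{i-1},W_i]$, where $W_i$ counts the boxes of the first $i$ hooks of $\lambda^{>M}$, is slid $i-1$ steps along its diagonal and relabelled $\ell-W_{i-1}$.

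This gives $S(\lambda,\mu)\ge d_\mu\binom{W}{j}$, hence $A_{\lambda,\mu}\ge W^{\downarrow j}/n^{\downarrow j}\ge e^{-2C^2j^{3/2}/n^{1/2}}$, because $n-W\le sM\le C^2\sqrt{jn}$. This works uniformly for $j\le C^{-8}n$. The linear regime is handled crudely by keeping only the excited diagram $E=\mu$ and using $d_\mu^2\le j!$, which gives $A_{\lambda,\mu}\ge (j/(en))^j$.
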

For the upper bound, we only need to plug Theorem~\ref{thm: main balanced Moore Russell form} into the estimates of \cite{DousseFeray2019skewdiagramscharacters}.
\begin{proof}[Proof of Theorem~\ref{thm: skew diagrams upper bound Dousse Feray conjecture} (upper bound part)]
    Let $\mu \subseteq \lambda$ be two $C$-balanced Young diagrams. Denote $k = |\mu|$ and $n=|\lambda|$. Applying the triangle inequality and Theorem~\ref{thm: main balanced Moore Russell form} to \cite[Lemma 10]{DousseFeray2019skewdiagramscharacters} (originally from \cite{Stanley2003EnumerationSkewYoungTableaux}), we obtain, setting $B = 5C^2$,
    \begin{equation}
        A_{\lambda, \mu} \leq \sum_{\sigma\in\kS_k} | \chi^\lambda(\sigma_+) | | \chi^\mu(\sigma) | \leq \sum_{\sigma\in\kS_k} \pg \frac{B^2}{\sqrt{nk}}\pd^{|\sigma|},
    \end{equation}
where $\sigma_+$ is the extension of $\sigma$ to $\kS_n$ by fixing $\ag k+1, \ldots, n\ad$. Decomposing according to the value of $|\sigma|$ and applying \cite[Lemma 13]{DousseFeray2019skewdiagramscharacters} (originally from \cite{FeraySniady2011}) then gives
\begin{equation}
    A_{\lambda, \mu} \leq \sum_{i=0}^{k-1} \frac{k^{2i}}{i!} \pg \frac{B^2}{\sqrt{nk}}\pd^{i} \leq \sum_{i=0}^{\infty} \frac{(B^2k^{3/2}/n^{1/2})^i}{i!} = \exp\pg B^2\frac{k^{3/2}}{n^{1/2}} \pd. \qedhere
\end{equation}
\end{proof}
The lower bound requires other arguments. We will prove it using the Naruse hook length formula, see \cite{Naruse2014, MoralesPakPanova2018I}.
We follow the notation from \cite{Teyssier2025SkewDimensionsAndCharacters}. Let $\lambda\vdash n$. Given a box $u \in \lambda$, $H(\lambda, u)$ is the hook length of $u$ in $\lambda$. Given a subset of boxes $E\subseteq \lambda$, we set $H(\lambda, E) = \prod_{u\in E} H(\lambda, u)$. Given a sub-diagram $\mu \subseteq \lambda$, we denote the set of excited diagrams of $\mu$ in $\lambda$ by $\cE(\lambda, \mu)$. Finally, given two integers $0\leq k\leq n$, we use the notation $n^{\downarrow k} = n!/(n-k)!$ for falling factorials. Then, given two Young diagrams $\mu \subseteq \lambda$ the Naruse hook length formula can be written as 
\begin{equation}
\frac{d_{\lambda\backslash\mu}}{d_\lambda} =\frac{S(\lambda, \mu)}{|\lambda|^{\downarrow |\mu|}}, \quad \quad \text{ where } \quad \quad S(\lambda, \mu) = \sum_{E\in \cE(\lambda, \mu)} H(\lambda, E).
\end{equation}

The lower bound of Theorem~\ref{thm: skew diagrams upper bound Dousse Feray conjecture} is easy to obtain if the diagram $\mu$ is large.
\begin{lemma}\label{lem: lower bound Dousse Feray k order n}
    Let $C>0$. There exists a constant $K_1$, depending only on $C$, such that the following holds. Let $\mu \subseteq \lambda$ be two (non-empty) $C$-balanced Young diagrams such that $|\mu| \geq |\lambda|/C^2$. Then $\ln A_{\lambda, \mu}  \geq - K_1 |\mu|^{3/2}/|\lambda|^{1/2}$.
\end{lemma}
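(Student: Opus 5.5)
When $|\mu|\geq |\lambda|/C^2$, the right-hand side $K_1|\mu|^{3/2}/|\lambda|^{1/2}$ is at least $K_1 |\mu|/C$. It is therefore enough to show that $\ln A_{\lambda,\mu}\geq -K_2|\mu|$ for a constant $K_2$ depending only on $C$, and then take $K_1 = CK_2$. Write $k=|\mu|$ and $n=|\lambda|$. Using the Naruse formula in the form stated above,
\begin{equation*}
A_{\lambda,\mu} = k!\,\frac{d_{\lambda\backslash\mu}}{d_\lambda d_\mu} = \frac{k!\,S(\lambda,\mu)}{n^{\downarrow k}\, d_\mu}.
\end{equation*}
The plan is to bound each factor crudely, keeping only errors of the form $e^{O(k)}$.

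First, I will lower bound $S(\lambda,\mu)$ by its single term $E=\mu$, the trivial excited diagram, which gives $S(\lambda,\mu)\geq H(\lambda,\mu)=\prod_{u\in\mu}H(\lambda,u)$. Since $\mu\subseteq\lambda$, each hook $H(\lambda,u)$ is at least $H(\mu,u)$. This yields $H(\lambda,\mu)\geq H_\mu = k!/d_\mu$, which is not enough, because it only produces $A_{\lambda,\mu}\geq (k!)^2/(n^{\downarrow k} d_\mu^2)$.

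Instead, I will use the hook length formula for $d_\mu$ directly: $k!/d_\mu=H_\mu$. Hence
\begin{equation*}
A_{\lambda,\mu}\geq \frac{H(\lambda,\mu)\,H_\mu}{n^{\downarrow k}}\geq \frac{H_\mu^2}{n^{k}}.
\end{equation*}
It now suffices to show $H_\mu\geq (c\,n)^{k/2}$ with $c=c(C)>0$, because then $A_{\lambda,\mu}\geq c^{k}$. By the hook length formula and the bound $d_\mu\leq \sqrt{k!}$, we get $H_\mu=k!/d_\mu\geq \sqrt{k!}\geq (k/e)^{k/2}$. Since $k\geq n/C^2$, this is at least $(n/(eC^2))^{k/2}$, so $c=1/(eC^2)$ works. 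Therefore $\ln A_{\lambda,\mu}\geq -k\ln(eC^2)$, and $K_2=\ln(eC^2)$ (or $\max(\ln(eC^2),0)$) suffices.

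I expect the main obstacle to be only bookkeeping: checking that $d_\mu^2\leq k!$ (which follows from $\sum_{\nu\vdash k} d_\nu^2=k!$) and that $n^{\downarrow k}\leq n^k$. The balancedness of $\lambda$ and $\mu$ is not really needed beyond the hypothesis $k\geq n/C^2$.
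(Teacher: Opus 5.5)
Your proof is correct and is essentially the paper's argument: keep only the trivial excited diagram $E=\mu$ in $S(\lambda,\mu)$, use $H(\lambda,u)\geq H(\mu,u)$, the hook length formula, $d_\mu^2\leq k!$, $k!\geq (k/e)^k$, $n^{\downarrow k}\leq n^k$ and $k\geq n/C^2$ to get $\ln A_{\lambda,\mu}\geq -k\ln(eC^2)\geq -C\ln(eC^2)\,k^{3/2}/n^{1/2}$. One correction to the write-up: the bound $A_{\lambda,\mu}\geq (k!)^2/(n^{\downarrow k}d_\mu^2)$ that you dismiss as ``not enough'' is exactly the bound your ``instead'' step uses (since $H_\mu^2=(k!)^2/d_\mu^2$), and it does suffice.
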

\begin{proof}
Write $n=|\lambda|$ and $k=|\mu|$. By the Naruse hook length formula and since $\mu\in \cE(\lambda, \mu)$, we have $\frac{d_{\lambda\backslash\mu}}{d_\lambda} \geq \frac{H(\mu,\mu)}{n^{\downarrow k}}$. It therefore follows from the standard hook length formula and using that $d_\mu^2 \leq \sum_{\nu\vdash k} d_\nu^2 = k!$,  that
\begin{equation}
    A_{\lambda, \mu} \geq \frac{k!}{d_\mu} \frac{H(\mu,\mu)}{n^{\downarrow k}}  = \frac{k!}{d_\mu^2} \frac{k!}{n^{\downarrow k}} \geq \frac{(k/e)^k}{n^k}\geq \pg eC^2\pd^{-k} \geq \pg eC^2\pd^{-Ck^{3/2}/n^{1/2}}.
\end{equation}
This proves the desired result, with $K_1 = C\ln(eC^2)$.
\end{proof}
Given a Young diagram $\lambda$, we denote its $i$-th hook (as a set) by $\lambda^{i}$, and given $j\geq 1$, we set $\lambda^{\leq j} = \cup_{i\leq j}\lambda^i$ and $\lambda^{> j} = \cup_{i> j}\lambda^i$: these sets are the union of the first $j$ hooks, and its complement, a \textit{deep center} of the diagram.

We will also need the following intermediate estimate, which is interesting on its own. The marked diagrams in the next proof are similar to the hook functions from \cite{NovelliPakStoyanovskii1997DirectBijectiveProofHookLengthFormula}.

\begin{proposition}\label{prop: marked injection Naruse}
    Let $\mu\subseteq\lambda$ be two Young diagrams. Let $M \geq \max(\mu_1, \mu'_1)$ be an integer and set $W = |\lambda^{>M}|$. Assume that $[M^M]\subseteq \lambda$ and $W\geq |\mu|$. Then
    \begin{equation}
        S(\lambda, \mu) \geq d_\mu \binom{W}{|\mu|}.
    \end{equation}
\end{proposition}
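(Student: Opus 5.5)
The plan is to read $S(\lambda,\mu)$ combinatorially and construct an injection. For each excited diagram $E\in\cE(\lambda,\mu)$, the product $H(\lambda,E)$ counts the \emph{hook functions} on $E$: maps $\phi$ sending each box $u\in E$ to a box $\phi(u)$ in the hook of $u$ in $\lambda$. Hence $S(\lambda,\mu)$ is the number of pairs $(E,\phi)$, which I will call marked excited diagrams, in the spirit of \cite{NovelliPakStoyanovskii1997DirectBijectiveProofHookLengthFormula}. The right-hand side $d_\mu\binom{W}{|\mu|}$ counts pairs $(T,X)$, where $T$ is a standard tableau of $\mu$ and $X\subseteq\lambda^{>M}$ has $|X|=|\mu|=:k$. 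It therefore suffices to construct an injection $(T,X)\mapsto(E,\phi)$.

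The construction would go as follows. Fix a total order on $\lambda^{>M}$, say the reading order, and list $X=\{x_1<\dots<x_k\}$. Let $u_\ell$ be the box of $\mu$ containing the entry $\ell$ in $T$. The hypothesis $[M^M]\subseteq\lambda$ together with $M\ge\max(\mu_1,\mu_1')$ puts $\mu$ inside the $M\times M$ square. A box $x=(a,b)$ with $a,b>M$ can therefore be reached from any $u=(i,j)\in\mu$ by sliding $u$ down the diagonal: at the shifted position $(i+t,j+t)$, the box $x$ lies in the row arm when $t=a-i$ and $b\ge j+t$, and in the leg when $t=b-j$ and $a\ge i+t$. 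Exactly one of these applies, according to whether $b-j\ge a-i$ or not. The idea is to process $\ell=1,\dots,k$ in order, perform excited moves on $u_\ell$ (and, if forced, on the boxes already placed after it) to bring it to a position whose hook contains $x_\ell$, and set $\phi(\text{new }u_\ell)=x_\ell$. Processing boxes in the order given by the standard tableau $T$ guarantees that the cells to the upper-left of $u_\ell$ have already been placed, so the configuration remains an excited diagram. The room needed for these moves comes from $[M^M]\subseteq\lambda$ and the fact that the targets lie in $\lambda^{>M}$. The inequality $W\ge|\mu|$ is only needed for the binomial coefficient to be nonzero.

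For injectivity, the decoding would recover $X$ as the image set of $\phi$ restricted to the boxes whose marks lie in $\lambda^{>M}$. It would then recover $T$ by ranking the boxes of $E$ according to the order of their marks, and tracking each excited box back along its diagonal to its original cell of $\mu$; the diagonal structure of excited moves makes this tracking canonical. One must also check that distinct $T$ give distinct original cells for the $\ell$-th mark, which is clear once the traceback is well defined.

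The main obstacle is making the placement step genuinely valid and reversible. Excited moves are constrained: a box can move to $(i+1,j+1)$ only if $(i,j+1)$, $(i+1,j)$ and $(i+1,j+1)$ are free. Pushing $u_\ell$ far down the diagonal may therefore require pushing boxes that were already placed, which would destroy their marks. My first attempt would avoid this by using the Novelli--Pak--Stoyanovskii style jeu de taquin order, processing boxes from the largest label downwards so that a moved box never blocks later ones. Only if that fails would I fall back on the weaker, cruder injection that uses a single uniform diagonal shift of $\mu$ for each $X$.
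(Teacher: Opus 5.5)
\textbf{There is a genuine gap.} Your overall plan is the paper's: inject pairs $(T,X)$, i.e.\ standard fillings of $\mu$ by $k=|\mu|$ values from a $W$-element set, into marked excited diagrams. But your concrete encoding cannot work for any processing order. In it, the slid box is marked by the target box $x_\ell$ itself, so that $\phi(E)=X$. Take $\mu=[2,2]$, $M=2$, $\lambda=[6^6]$ (all hypotheses hold, with $W=16$), and $X=\{(3,3),(3,4),(3,5),(3,6)\}$ in your (row, column) coordinates. Excited moves preserve contents, so every $E\in\mathcal{E}(\lambda,\mu)$ contains two distinct boxes $(c,c)$ and $(c',c')$ of content $0$. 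The hook of $(c,c)$ meets $\{(3,b): b\geq 3\}$ only if $c=3$. Hence no marked excited diagram has $\phi(E)=X$, and the $d_\mu=2$ pairs $(T,X)$ have no possible image.

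Your remark on the processing order is also backwards. An excited move of $u_\ell$ needs the cells to its right and below, which carry \emph{larger} entries of $T$, to have moved first. The fallback with one uniform shift per $X$ is too lossy as well. It only reaches $\sum_t H(\lambda,\mu+(t,t))$ marked diagrams, which is $O(N^3)$ for $\mu=[1,1]$ and $\lambda=[N^N]$, whereas $\binom{W}{2}$ is of order $N^4$.

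The missing idea is to use \emph{integer} marks, together with a displacement that depends only on which hook of $\lambda^{>M}$ the label belongs to, not on the position of the box. The paper numbers $\lambda^{>M}$ hook by hook, so the $i$-th hook of $\lambda^{>M}$ gets the labels $W_{i-1}+1,\dots,W_i$. A box $u$ with label $\ell$ in the $i$-th block goes to $v=u+(i-1,i-1)$ and receives the integer mark $\ell-W_{i-1}$. This mark satisfies $\ell-W_{i-1}\leq W_i-W_{i-1}=H(\lambda,(M+i,M+i))\leq H(\lambda,v)$, the last inequality because both coordinates of $v$ are at most $M+i-1$. Only this comparison of hook lengths is needed, never the containment of a box in a hook.

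Since $i$ is weakly increasing in $\ell$ and $T$ is standard, the displacement is weakly increasing along rows and columns. So the image is an excited diagram inside $[(\delta-1)^{\delta-1}]\subseteq\lambda$. Injectivity follows because excited moves preserve the order of boxes along each diagonal. Hence the displacement of each box, and therefore $i$ and $\ell$, can be read off from $E$.
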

\begin{proof}
    Let $k= |\mu|$ and $\delta = \delta(\lambda)$. For $0\leq i \leq \delta - M$, we set $W_i = W-|\lambda^{>M+i}|$: this is the number of boxes on the first $i$ hooks of $\lambda^{>M}$. Let us prove that a set of size $d_\mu \binom{W}{k}$ injects into a set of size $S(\lambda, \mu)$. 

On the one hand, consider the set $\ST_W(\mu)$ of standard tableaux of $\mu$ filled with $k$ numbers from $1$ to $W$ (instead of all numbers from $1$ to $k$). Its cardinality is $d_\mu \binom{W}{k}$.

On the other hand, consider the set $\cE_{\textup{marked}}(\lambda, \mu)$ of marked excited diagrams of $\mu$ in $\lambda$, that is, the set $\cE(\lambda, \mu)$ of excited diagrams of $\mu$ in $\lambda$ where each box $u$ is labelled with an integer from 1 to its hook length $H(\lambda, u)$. Its cardinality is $S(\lambda, \mu)$.

Now, consider the map which sends a tableau $T \in \ST_W(\mu)$ to a marked excited diagram, by applying the following to each box $u\in T$. Denote the label of $u$ by $\ell$. Let $i$ be the unique index such that $W_{i-1}<\ell \leq W_i$. Then move $u$ by $i-1$ steps along its diagonal (that is, add $(i-1, i-1)$ to the coordinates) and replace its label by $\ell - W_{i-1}$. We illustrate this process in Figure~\ref{fig: marked Naruse injection}.

Let us prove that it is well defined. Let $T\in \ST_W(\mu)$.
Write $\delta' = \delta-M$. Then each box of $T$ is moved in diagonal by at most $\delta'-1$ by construction. Since it starts in the square $[M^M]$ it ends in the square $[(M+\delta'-1)^{M+\delta'-1}] = [(\delta-1)^{\delta-1}]$, and hence in $\lambda$. Now, due to the standard constraints on the labels in $T$, the boxes higher in the same column or farther right in the same row of a given box $u$ move at least as much as $u$, that is, the displacement of boxes is weakly increasing along rows and columns. This implies that the resulting set of boxes is an excited diagram of $\mu$. Finally, consider a box $u\in T$ with label $\ell$, let $i$ be the unique index such that $W_{i-1}<\ell \leq W_i$. Let $v=u+(i-1, i-1)$ and $z = (M+i, M+i)$, coordinatewise. Then, using that $W_{i-1}<\ell \leq W_i$ and the monotonicity of the hook lengths, we have $1\leq \ell - W_{i-1} \leq W_i-W_{i-1} = H(\lambda, z)\leq H(\lambda, v)$. In particular the new label $\ell' = \ell - W_{i-1}$ is between $1$ and $H(\lambda, v)$. Combining all this shows that the image of $T$ is indeed in $\cE_{\textup{marked}}(\lambda, \mu)$.

Now, it is easy to recover the original tableau (on the left in Figure~\ref{fig: marked Naruse injection}) from the marked diagram (on the right in Figure~\ref{fig: marked Naruse injection}), by pushing the boxes methodically along their diagonal, as close as possible to the corner, starting from the boxes on $\lambda^1$, then $\lambda^2$, and so on, and adding $W_i$ to the label of the box if it moved back by $(-i,-i)$ coordinatewise. This shows that the process is injective. 

Hence the process that we defined is an injection from $\ST_W(\mu)$ to  $\cE_{\textup{marked}}(\lambda, \mu)$, which implies that $d_\mu \binom{W}{k} \leq S(\lambda, \mu)$, as desired.
\end{proof}

\begin{figure}[!ht]
    \centering
{
\ytableausetup{boxsize= 1.2em}
\begin{ytableau}
    \none &\\
    \none &&&&&*(darkgray)  \\
    \none &&&&&*(darkgray)&*(darkgray) \\
    \none &&&&&*(darkgray)&*(darkgray)&*(darkgray) \\
    \none &&&&&*(darkgray)&*(darkgray)&*(darkgray)&*(darkgray)&*(darkgray)\\
    \none &&&&&*(darkgray)&*(darkgray)&*(darkgray)&*(darkgray)&*(darkgray) \\
    \none &&&&&&&&& \\
    \none &*(lightgray)15&&&&&&&&&\\
    \none &*(lightgray)5&*(lightgray)12&&&&&&&& \\
    \none &*(lightgray)1&*(lightgray)8&*(lightgray)9&*(lightgray)16&&&&&&&&\\
\end{ytableau}
\quad \quad \begin{ytableau}
    \none &\\
    \none &&&&&  \\
    \none &&&&&& \\
    \none &&&&&&& \\
    \none &&&&&&&&&\\
    \none &&&&&&&&& \\
    \none &&*(lightgray)6&&&&&&& \\
    \none &&&*(lightgray)3&&&*(lightgray)1&&&&\\
    \none &*(lightgray)5&&&&&&&&& \\
    \none &*(lightgray)1&*(lightgray)8&*(lightgray)9&&&&&&&&&\\
\end{ytableau}
}
    \caption{On the left: in light gray the diagram $\mu$, $k=7$, $M = 4$, in dark gray the diagram $\lambda^{>M}$, $ W_0 = 0, W_1 = 9, W_2 = 15, W_3 = W = 16$, the boxes of $\mu$ are filled in a standard way with numbers from $1$ to $W = 16$. On the right, the corresponding marked excited diagram after applying the injection. The boxes with initial label $\ell \in \ag1,5,8,9\ad$ satisfy $W_0 < \ell \leq W_1$, so they do not move nor change their label. The boxes with initial label $\ell \in \ag 12, 15\ad$ satisfy $W_1<\ell \leq W_2$, so they move by 1 step in diagonal and their label is decreased by $W_1 =9$. The box with initial label $\ell = 16$ satisfies $W_2<\ell \leq W_3$, so it moves by 2 steps in diagonal and its label is decreased by $W_2 =15$.}
    \label{fig: marked Naruse injection}
\end{figure}

\begin{proof}[Proof of Theorem~\ref{thm: skew diagrams upper bound Dousse Feray conjecture} (lower bound part)]
  Let $\mu \subseteq \lambda$ be two $C$-balanced Young diagrams. Without loss of generality we may assume that $C\geq 2$. Denote $k = |\mu|$ and $n=|\lambda|$. If $k\geq C^{-8}n$, then the result holds by Lemma~\ref{lem: lower bound Dousse Feray k order n} (applied with $C^4$ instead of $C$).
    
    From now on we assume that $k\leq C^{-8}n$. 
Set $M = \lfloor C\sqrt{k} \rfloor$. The $C$-balance assumptions together with the condition $k\leq C^{-8}n$ then ensure that $\mu \subseteq [M^M] \subseteq \lambda$. We also denote $S = |\lambda^{\leq M}|$ and $W = |\lambda^{>M}|$.

We have $S \leq s(\lambda) M \leq C\sqrt{n} C\sqrt{k} = C^2\sqrt{kn}$. Therefore $W-k\geq n/2$, and we also have $W\geq k$. We deduce that
\begin{equation}
    \frac{n^{\downarrow k}}{W^{\downarrow k}} = \prod_{i=0}^{k-1} \pg 1+\frac{S}{W-i}\pd \leq \pg e^{S/(n/2)} \pd^k \leq e^{2C^2k^{3/2}/n^{1/2}}.
\end{equation}
It follows that 
\begin{equation}
    A_{\lambda, \mu} = k! \frac{d_{\lambda\backslash\mu}}{d_\mu d_\lambda} = \frac{W^{\downarrow k}}{n^{\downarrow k}} \frac{S(\lambda, \mu)}{d_\mu \binom{W}{k}} \geq e^{-2C^2k^{3/2}/n^{1/2}}\frac{S(\lambda, \mu)}{d_\mu \binom{W}{k}}.
\end{equation}
This concludes the proof, since $d_\mu \binom{W}{k} \leq S(\lambda, \mu)$ by Proposition~\ref{prop: marked injection Naruse}.
\end{proof}

\section*{Acknowledgements}
We thank Valentin Féray for interesting related conversations.

This work was supported by the ANR project LOUCCOUM (ANR-24-CE40-7809).

\section*{AI-use statement}
The author (LT) used ChatGPT 6 Astra (C6A) to discuss proof ideas, literature review, and proofreading. Prior work on the topic and initial discussions with C6A convinced LT that usual peeling techniques would not work, even for square diagrams. The idea of considering river ribbon tableaux was then found by LT. Once this idea and the intuition behind it were shared with C6A, it completed the proof for square diagrams in one go. After a few iterations, the result was generalized to balanced diagrams and then to general shapes.

The paper was fully typed by LT. All proofs were rewritten and some were further simplified. LT takes full responsibility for the contents of the paper.

{\footnotesize
\bibliographystyle{alpha}
\bibliography{bibliographieLucas}
}
\end{document}